\documentclass{amsart}
\usepackage[english]{babel}
\usepackage[latin1]{inputenc}
\usepackage{mathrsfs}
\usepackage{amsmath}
\usepackage{amsthm}
\usepackage{amssymb}
\usepackage{indentfirst}

\usepackage{tikz}
\usetikzlibrary{shapes,fit,intersections}
\usetikzlibrary{decorations.markings}
\usetikzlibrary{decorations.pathreplacing}
\usetikzlibrary{patterns}
\usetikzlibrary{matrix,arrows}
%\usetikzlibrary{cd}
\usepgflibrary{decorations.pathmorphing}

\usepackage{tikz-cd}

\theoremstyle{plain} 
\newtheorem{prop}{Proposition}[section]
\newtheorem{thm}[prop]{Theorem}
\newtheorem{lem}[prop]{Lemma} 
\newtheorem{cor}[prop]{Corollary}

\theoremstyle{remark}

\newtheorem{oss}[prop]{Remark}
\newtheorem{ex}[prop]{Example}

\theoremstyle{definition}
\newtheorem{defn}[prop]{Definition}
\newcommand{\hk}{hyperk\"{a}hler }

\newcommand{\kahl}{K\"{a}hler }

\newcommand{\ktipo}{$K3^{[2]}$ type}
\newcommand{\kntipo}{$K3^{[n]}$ type}
\newcommand{\kntiposp}{$K3^{[n]}$ type }

\newcommand{\Sym}{\text{Sym}}
\newcommand{\Aut}{\text{Aut}}
\newcommand{\Pic}{\text{Pic}}
\newcommand{\R}{\mathbb R}
\newcommand{\Teich}{\operatorname{Teich}}
\newcommand{\Comp}{\operatorname{Comp}}
\newcommand{\Diff}{\operatorname{Diff}}

\newcommand{\Perspace}{\operatorname{{\mathbb P}\sf er}}

%%%%%%%%%%%%%%%%%%%%% Alexei's macros %%%%%%%%
\newcommand{\CC}{\mathbb{C}}
\newcommand{\Kth}{\textup{K3}}
\newcommand{\inv}{\imath}
\newcommand{\GL}{\textup{GL}}
\newcommand{\Lie}{\textup{Lie}}
\newcommand{\SL}{\textup{SL}}
\newcommand{\ZZ}{\mathbb{Z}}

\begin{document}
\title{Symplectic involutions of $K3^{[n]}$ type and Kummer $n$ type manifolds}
\author{Ljudmila Kamenova, Giovanni Mongardi, Alexei Oblomkov}
\address{Department of Mathematics, Stony Brook University, Math Tower 3-115, 
Stony Brook, NY 11794-3651, USA}
\email{kamenova@math.stonybrook.edu}
\address{Dipartimento di Matematica, Universit\'{a} degli studi di Bologna,  
Piazza di porta san Donato 5, Bologna, Italia 40126  }
\email{giovanni.mongardi2@unibo.it}
\address{Department of Mathematics and Statistics, University of Massachusetts, Lederle Graduate Tower, Amherst, MA 01003, USA}
\email{oblomkov@math.umass.edu}
\begin{abstract}
In this paper we describe the fixed locus of a symplectic involution on 
a hyperk\"ahler manifold of type $K3^{[n]}$ or of Kummer $n$ type. We prove 
that the fixed locus consists of finitely many copies of deformations of 
Hilbert schemes of $K3$ surfaces of lower dimensions and isolated fixed points. 
\end{abstract}
\keywords{Keywords: symplectic involution, hyperk\"ahler manifold, 
$K3^{[n]}$-type \\ MSC 2010 classification: 14J50, 53C26}

\maketitle

\section{Introduction}

An involution $\iota$ on a hyperk\"ahler manifold is symplectic if 
it preserves the holomorphic symplectic form.
Consider a hyperk\"ahler manifold $X$ of \kntipo, i.e., deformation 
equivalent to a Hilbert scheme of $n$ points on a $K3$ surface or of Kummer $n$ type, i.e. deformation equivalent to the Albanese fibre of the Hilbert scheme of $n+1$ points on an Abelian surface. 
We are interested in describing the fixed loci of symplectic involutions 
on $X$.\\
Automorphisms of \hk manifolds were studied by several authors, starting from the foundational work of Beauville \cite{Beau83}. As a non-exhaustive list of works let us mention the study of fixed point free automorphisms by Oguiso and Schr\"oer \cite{OS11}, and by Boissi\`{e}re, Nieper-Wisskirchen and Sarti \cite{BNWS12}, a full classification of symplectic actions on cohomology of fourfolds of $K3^{[2]}$ type by H\"ohn and Mason \cite{HM15} and by Boissi\`ere, Camere and Sarti for nonsymplectic actions \cite{BCS14}. 

 In \cite{nik} Nikulin proved 
that the fixed locus of a symplectic involution on a $K3$ surface consists 
of $8$ isolated fixed points. The second named author proved in \cite{mon1} 
that the fixed locus of a symplectic involution on a hyperk\"ahler 
manifold deformation equivalent to a Hilbert square of a $K3$ 
surface consists of $28$ isolated points and one $K3$ surface. 
This strengthens Camere's conjecture in \cite{cc}. 
Here we generalize these results to any hyperk\"ahler manifold of \kntipo. 

\begin{thm}
Let $X$ be a hyperk\"ahler manifold of \kntipo, and let $\iota$ be a
symplectic involution on $X$. Then the fixed locus $F$ of $\iota$ consists of 
finitely many copies of deformations of Hilbert schemes of $K3$ surfaces $S^{[m]}$ 
(where $m \leq \frac{n}{2}$) and possibly isolated fixed points 
(only when $n \leq 24$). The fixed locus $F$ 
is decomposed into loci of even dimensions $F_{2m}$, where $\max (0, 
\frac{n}{2} - 12) \leq m \leq \frac{n}{2}$. Each fixed locus $F_{2m}$ of 
dimension $2m$ has $$\sum_{2m=n-k-2l}{8 \choose k} { k \choose l}$$ connected components, each one of 
which is a deformation of a copy of $S^{[m]}$. In particular, 
the fixed locus $Z$ of largest dimension is the following.

({\it i}) If $n$ is even, then $Z$ consists of one copy of a deformation of $S^{[\frac{n}{2}]}$; 

({\it ii}) If $n$ is odd, then $Z$ consists of $8$ copies of a deformation of
$S^{[\frac{n-1}{2}]}$. 
\end{thm}

A key ingredient in the proof of this theorem is that the moduli space of 
pairs consisting of a \hk manifold of \kntiposp (or of Kummer $n$ type) together with a symplectic 
involution is connected. Using the Global Torelli theorem first we prove 
that $(X, \{\iota,Id_X\})$ is birational to a ``standard pair'', and then we prove 
that birational pairs can be deformed one into the other while preserving 
the group action. A ``standard pair'' for \kntiposp manifolds is a deformation of $(S^{[n]},G)$, 
where $S$ is a $K3$ surface and $G$ is a symplectic involution on $S^{[n]}$ 
induced by a symplectic involution on $S$. A group action on $X$ is called numerically standard (cf. Definition \ref{num_stand} for a precise statement) if its action on cohomology ``looks like'' a standard action. We prove the following: 

\begin{thm} 
Let $X$ be a manifold of \kntiposp or of Kummer $n$ type. Let $G\subset \mathrm{Aut}_s(X)$ be a 
finite group of numerically standard automorphisms. Then $(X,G)$ is a 
standard pair.
\end{thm}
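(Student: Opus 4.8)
The plan is to run a Torelli-type argument: realize $(X,G)$ first as a birational, then as a deformation-equivalent, model of an explicit standard pair built from a symplectic involution on a $K3$ (or abelian) surface. First I would analyze the action of $G$ on the lattice $H^2(X,\ZZ)$ together with its Hodge structure. Since $G$ acts symplectically, it fixes the period $\sigma_X$ and $\bar\sigma_X$, so on the transcendental lattice $T(X)$ it acts (generically) trivially, and the interesting action is on the invariant and coinvariant sublattices $H^2(X,\ZZ)^G$ and $(H^2(X,\ZZ)^G)^\perp$. The hypothesis that $G$ consists of \emph{numerically standard} automorphisms should mean precisely that this induced action on $H^2$ is conjugate, under the monodromy group, to the one coming from the standard model $(S^{[n]},G)$ (resp. the Kummer analogue). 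So the first step is to fix, once and for all, the standard pair $(S_0^{[n]},G_0)$ whose lattice-theoretic action matches, and to pin down the invariant/coinvariant lattices and the relevant discriminant-form data.

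Next I would use the surjectivity of the period map together with the Global Torelli theorem for hyperk\"ahler manifolds to produce a birational equivalence $X \dashrightarrow X_0$ taking $G$ to $G_0$. Concretely: choose a $G$-invariant K\"ahler class (average any K\"ahler class over the finite group $G$) to get a polarized period; on the standard-pair side the period map is dominant onto the appropriate period domain for the coinvariant lattice, so one can find a standard pair with the same Hodge-theoretic and monodromy data as $(X,G)$; Global Torelli then gives a bimeromorphic map intertwining the two group actions, after possibly composing with a monodromy operator commuting with $G$ (here one needs that such an operator can be realized by an actual bimeromorphism, which is the Hodge-theoretic Torelli input). At this stage $(X,G)$ and $(X_0,G_0)$ are birational as pairs.

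The final step is to upgrade "birational as pairs" to "deformation equivalent as pairs," which is the content asserted in the introduction as the second half of the Torelli argument. Birational hyperk\"ahler manifolds are deformation equivalent, but one must carry the group action along: I would work in the (equivariant) moduli space / Teichm\"uller space of marked pairs, observe that a $G$-equivariant bimeromorphic map corresponds to two points lying in the same fiber of the period map on the $G$-invariant Teichm\"uller space, and then connect them by a path, using that the period map is a local isomorphism and that its fibers (the "inseparable" points) are permuted by a discrete group of birational transformations commuting with $G$. Equivalently, one can use the theory of $G$-invariant twistor lines / wall-and-chamber structure on the $G$-invariant positive cone: the K\"ahler cone of $X$ and that of a birational model are separated by walls, and crossing a wall corresponds to a $G$-equivariant deformation. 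Running this in the $G$-equivariant setting shows $(X,G)$ deforms to $(X_0,G_0)$, i.e. $(X,G)$ is a standard pair.

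The main obstacle I expect is precisely this last equivariance bookkeeping: the Global Torelli theorem and the birational$\,\Rightarrow\,$deformation-equivalence statement are classically formulated without a group action, and one must check that every step — surjectivity of periods, the Torelli bimeromorphism, the monodromy corrections, the wall-crossing deformations — can be performed $G$-equivariantly. This forces one to control the centralizer of $G$ inside the monodromy group and to know that the relevant $G$-invariant period domain is connected (so that any two standard-pair periods can be joined); that connectivity, together with the classification of monodromy-conjugacy classes of symplectic involutions on the $K3^{[n]}$ lattice, is what "numerically standard" is designed to guarantee, and verifying it is the real work. For the Kummer $n$ type case the same strategy applies with $H^2$ of an abelian surface and the appropriate Kummer lattice replacing the $K3$ data, the only change being in the explicit identification of the standard model and its coinvariant lattice.
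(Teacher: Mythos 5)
Your overall architecture is the same as the paper's: use the numerically standard hypothesis to manufacture a $K3$ (or abelian) surface whose lattice and Hodge data match $(X,G)$, invoke Verbitsky's Global Torelli theorem to get a $G$-equivariant bimeromorphic map to the natural pair, and then upgrade birationality of pairs to deformation equivalence of pairs. Your first two steps track the paper closely, including the role of the Mukai-lattice compatibility in ensuring the two marked manifolds lie in the same connected component of the Teichm\"uller space (so that Torelli applies at all).

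The gap is in the last step, which you yourself flag as ``the real work'' but then only gesture at. Neither of your two proposed mechanisms works as stated. Connecting the two marked pairs ``by a path inside a fiber of the period map'' is not possible: on $\Teich$ the fiber over a period point consists of finitely many pairwise non-separated points, and on $\Teich_b$ it is a single point, so there is no path to take; one must move \emph{out} of the fiber. And ``crossing a wall corresponds to a $G$-equivariant deformation'' is not correct either: wall-crossing in the positive cone exchanges birational models of a fixed complex manifold, it is not a deformation. The idea you are missing is the paper's specialization argument: take the common $G$-invariant local deformation space $U^G$ of the two birational pairs (Huybrechts gives a fibrewise birational map over a common base), and pass to a \emph{very general} point $t\in U^G$ where $\Pic(\mathcal{X}_t)=S_G(\mathcal{X}_t)$. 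Since $G$ is symplectic, an invariant \kahl class is orthogonal to the coinvariant lattice, hence to all of $\Pic(\mathcal{X}_t)$; by the wall description of the \kahl cone this forces the \kahl cone to equal the whole positive cone, so $\mathcal{X}_t$ has no nontrivial birational models and $\mathcal{X}_t\cong\mathcal{Y}_t$ compatibly with $G$. Both pairs then deform to $(\mathcal{X}_t,G)$. Without this (or an equivalent device), your argument does not close; with it, no control of the centralizer of $G$ in the monodromy group or of twistor lines is needed.
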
  

As a corollary to this theorem we obtain that the fixed locus of the 
symplectic involution $\iota$ has 
the form of the fixed locus on $S^{[n]}$ of a symplectic involution coming 
from the $K3$ surface $S$. Thus, we can restrict to the case 
where $X=S^{[n]}$ and $\iota$ is induced by an involution of $S$. Therefore, $\iota$ also defines an involution on 
$\Sym^n S$, compatible with the map $S^{[n]}\rightarrow S^{(n)}$. Depending on the parity of $n$, we can see that the number of 
irreducible components of the fixed locus of largest dimension is either 
one or eight. Using basic combinatorics, one can count the number of 
irreducible components of the fixed locus in each possible dimension. \\

In an analogous way, we compute the fixed locus of a symplectic involution on a \hk manifold of Kummer $n$ type, by reducing to the case of an involution coming from the sign change on the abelian surface $A$:

\begin{thm} 
Let $X$ be a hyperk\"ahler manifold of Kummer $n$ type, and let $\iota$ be a
symplectic involution on $X$ acting non-trivially on \(H_2(X)\). Then the fixed locus $F$ of $\iota$ consists of 
finitely many copies of deformations of Hilbert schemes of $K3$ surfaces 
$S^{[m]}$ (where $m \leq \frac{n+1}{2}$) and possibly isolated fixed points 
(only when $n \leq 48$). The fixed locus $F$ 
is decomposed into loci of even dimensions $F_{2m}$, where $\max (0, 
\frac{n+1}{2} - 24) \leq m \leq \frac{n+1}{2}$. Each fixed locus $F_{2m}$ of 
dimension $2m$ has $N^n_m$ 
connected components, each one of which is a deformation of a copy of $S^{[m]}$. 
%In particular, 
%the fixed locus $Z$ of largest dimension is one copy of $S^{[\frac{n+1}{2}]}$.

\end{thm}

\section{Preliminaries}

Let $X$ be a hyperk\"ahler manifold, i.e., a complex K\"ahler simply 
connected manifold such that $H^{2,0}(X) \cong \mathbb C$ is generated by a 
holomorphic symplectic $2$-form. If $X$ is deformation equivalent to 
the Hilbert scheme $S^{[n]}$ of $n$ points of a $K3$ surface $S$, we say 
that $X$ is of \kntipo. If $X$ is deformation equivalent to the generalized Kummer $2n$-fold $K_n(A)$ of an abelian surface $A$, we say that $X$ is of Kummer $n$ type.

\begin{defn}
Let $X$ be a complex manifold and let $G$ be a subgroup of $\Aut(X)$, 
the group of automorphisms of $X$. A deformation of the pair $(X,G)$ 
consists of the following data: 

{\it (i)}  A flat family $\mathcal{X}\rightarrow B$, where $B$ is connected 
and $\mathcal{X}$ is smooth, and a distinguished point $0\in B$ such that 
$\mathcal{X}_0\cong X$.

{\it (ii)} A faithful action of the group $G$ on $\mathcal{X}$ inducing 
fibrewise faithful actions of $G$. 

Two pairs $(X,G)$ and $(Y,H)$ are deformation equivalent if $(Y,H)$ 
is a fibre of a deformation of the pair $(X,G)$. 

\end{defn}

In this paper we are mostly interested in deformations of the pair 
$(X, {\mathbb Z}_2)$, where $X$ is of \kntiposp and ${\mathbb Z}_2$ is 
generated by a symplectic involution. 

\begin{defn} 
Let $S$ be a $K3$ surface and let $G\subset \mathrm{Aut}_s(S)$ be a subgroup 
of the symplectic automorphisms on $S$. Then $G$ induces a subgroup of the 
symplectic morphisms on $S^{[n]}$ which we still denote by $G$. We call the 
pair $(S^{[n]},G)$ a natural pair as in \cite[Definition 1]{boi}. 
The pair $(X,H)$ is standard if it is deformation equivalent to a natural pair, as in \cite[Definition 1.2]{mon1}. 
If $A$ is an abelian surface, the same definitions apply to the generalized Kummer $2n$-fold $K_n(A)$ and symplectic automorphisms preserving $0\in A$, however the reader should notice that the induced action of $G$ on $H^2(K_n(A))$ is not necessarily faithful (while it stays faithful on $K_n(A)$), as there is a group of automorphisms acting trivially on the second cohomology, see \cite{BNS_enriques} .
\end{defn}
\begin{oss}
Notice that we stick with the convention used in \cite[Definition 1.2]{mon1}, where a pair $(X,G)$ is called standard if it can be deformed to a natural pair, instead of using the definition from \cite[Definition 4.1]{BCS14}, where the authors call such a pair natural as well, but notice that the two definitions are actually equivalent. 
\end{oss}
\begin{defn}
Let $G$ be a finite group acting faithfully on a manifold $X$. 
Define the invariant locus $T_G(X)$ inside $H^2(X, \mathbb Z)$ to be the 
fixed locus of the induced action of $G$ on the cohomology. The co-invariant 
lattice $S_G(X)$ is the orthogonal complement $T_G(X)^\perp$. The fixed locus 
of $G$ on $X$ is denoted by $X^G$. 
\end{defn}

As automorphisms of K3 and abelian surfaces are better known, it is interesting to determine whether an automorphism group on a manifold of \kntiposp (or of Kummer $n$ type) is standard or not. For related works in this direction, see \cite{BCS14} and \cite{jou}. We give the following criterion:
\begin{defn}\label{num_stand}
Let $Y$ be a manifold of \kntiposp or of Kummer $n$ type. A pair $(Y,H)$ is called numerically standard if the representation of $H$ on $H^2(Y,\mathbb{Z})$ is isomorphic to that of a standard pair $(X,H)$, up to conjugation by the monodromy group. More precisely, there exists a $K3$ (or abelian) surface $S$ with an $H$ action such that
\begin{itemize}
\item $S_H(S)\cong S_H(Y)$,
\item $T_H(S)\oplus \mathbb{Z}\delta=T_H(S^{[n]})\cong T_H(Y)$, where $2\delta$ is the class of the exceptional divisor of $S^{[n]}\rightarrow S^{(n)}$ (and analogously for the Kummer $n$ case),
\item The two isomorphisms above extend to isomorphisms of the Mukai lattices $U^4\oplus E_8(-1)^2$ (or $U^4$ in the Kummer case) after taking the canonical choice of an embedding of $H^2$ into the Mukai lattice described by Markman \cite[Section 9]{mar_tor} for the \kntiposp case and by Wieneck \cite[Theorem 4.1]{wie} for the Kummer case.
\end{itemize}
All the above isomorphisms are $H$-equivariant.
\end{defn}
This definition is slightly stronger than the one given in \cite[Definition 2.4]{mon2} for manifolds of \kntiposp, but they coincide when $n-1$ is a prime power, which was the case of interest in that paper. Notice that it is relatively easy to check the first two conditions, while the third one is more involved but, under some hypothesis on the group action, it is implied by the first two, see Proposition \ref{mukai_not}. \\ %However if $T_H(S)$ contains a copy of the hyperbolic lattice $U$ 

Let $X$ be a compact complex manifold and let 
$\Diff^0(X)$ be the connected component of its diffeomorphism 
group containing the identity. Denote by $\Comp$ the space of complex 
structures on $X$, equipped with the structure of a Fr\'echet manifold. 

\begin{defn}
The Teichm\"uller space of $X$ is the quotient  $\Teich:=\Comp/\Diff^0(X)$. 
\end{defn}

The Teichm\"uller space is finite-dimensional for a Calabi-Yau 
manifold $X$ (see \cite{cat}). 
Let $\Diff^+(X)$ be the group of orientable diffeomorphisms of 
a complex manifold $X$. The {\it mapping class group} 
$\Gamma:=\Diff^+(X)/\Diff^0(X)$ acts on $\Teich$. \\

By Huybrechts's result \cite[Theorem 4.3]{huyb}, 
non-separated points in the moduli space of marked hyperk\"ahler 
manifolds correspond to birational hyperk\"ahler manifolds. 
Consider the equivalence relation $\sim$ on $\Teich$ identifying 
non-separated points. Let $\Teich_b = \Teich/{}_\sim$ be the 
{\it birational Teichm\"uller space}. \\

Let $X$ be a hyperk\"ahler manifold, and let $\Teich$ be its Teichm\"uller 
space. Consider the map 
$\mathcal{P} : \Teich \rightarrow \mathbb PH^2(X, \mathbb C)$, 
sending a complex structure $J$ to the line 
$H^{2,0}(X,J) \in \mathbb PH^2(X, \mathbb C)$. 
The image of $\mathcal{P}$ is the open subset of a quadric, defined by 
$\Perspace:=\big\{l\in\mathbb PH^2(X,\mathbb C)\ \big|\ q(l,l)=0,\
q(l,\bar l)>0\big\}.$

\begin{defn}
The map $\mathcal{P}: \Teich \rightarrow \Perspace$ is called the period map, 
and the set $\Perspace$ is called the period domain. 
\end{defn}

The period domain $\Perspace$ is identified with the quotient 
$\frac{SO(3, b_2-3)}{SO(2) \times SO(1, b_2 -3)}$, see \cite[Proposition 3.1]{huy_tor}. 
%which is the 
%Grassmannian $\text{Gr}_{++}$ of positive oriented 2-planes in $H^2(M, \R)$. 

\begin{thm} (Verbitsky's Global Torelli, \cite{verb}) \label{torelli}
The period map $\mathcal{P}: \Teich_b^0 \rightarrow \Perspace$ is an 
isomorphism on each connected component of $\Teich_b$. 
\end{thm}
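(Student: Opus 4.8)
The plan is to present $\mathcal{P}$ as a covering map onto $\Perspace$ and then invoke the fact that $\Perspace$ is simply connected, so that the only covering is the trivial one. The first step is local Torelli: by Bogomolov--Tian--Todorov unobstructedness, $\Teich$ is a smooth (a priori non-Hausdorff) complex manifold of dimension $b_2-2$, and the derivative of $\mathcal{P}$ is an isomorphism --- contracting deformation classes with the holomorphic symplectic form identifies $H^1(X,T_X)$ with $H^1(X,\Omega^1_X)$, and under this identification the period derivative becomes the tautological map onto the tangent space of $\Perspace$, which also has dimension $b_2-2$. Hence $\mathcal{P}$ is \'etale. The passage to the birational Teichm\"uller space $\Teich_b$ only identifies points with equal period --- those parametrizing bimeromorphic models, by Huybrechts \cite[Theorem 4.3]{huyb} --- so $\mathcal{P}$ descends to a still-\'etale map on each connected component $\Teich_b^0$.

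Next I would verify that each connected component of $\Teich_b$ is Hausdorff. This is a delicate point, since the Hausdorff reduction of a non-Hausdorff manifold need not itself be Hausdorff; the resolution is that, within a fixed deformation class, the bimeromorphic models sitting over a given period are controlled by the wall-and-chamber decomposition of the positive cone into K\"ahler (or movable) cones of birational models. Consequently only finitely many points of $\Teich$ lie over any period, and they can be separated coherently, so that $\Teich_b^0$ is an honest complex manifold \'etale over $\Perspace$.

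The heart of the matter is the twistor machinery. A hyperk\"ahler metric compatible with a complex structure $I$ yields a twistor family over $\mathbb{C}P^1$, that is, a rational curve in $\Teich$ whose $\mathcal{P}$-image is a \emph{twistor line}: the conic cut out on $\mathbb{P}(W\otimes\mathbb{C})$ by the quadric $q$, where $W\subset H^2(X,\mathbb{R})$ is the positive three-space spanned by the three K\"ahler/holomorphic-symplectic forms of the metric. One shows that twistor lines map to twistor lines and, crucially, that a sufficiently rich family of twistor lines through any period point $\mathcal{P}(x)$ lifts to twistor lines through $x$ in $\Teich_b$ --- here one must move $x$ to a bimeromorphic model whose K\"ahler cone contains the required auxiliary direction, which is exactly why it is essential to work in $\Teich_b$ rather than $\Teich$. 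Since any two points of $\Perspace$ are joined by a chain of twistor lines, and the twistor lines through a fixed point fill out enough directions, a path-lifting argument upgrades ``\'etale plus twistor-line lifting'' into ``$\mathcal{P}$ is a covering map on each connected component of $\Teich_b$''.

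Finally, $\Perspace\cong SO(3,b_2-3)/\bigl(SO(2)\times SO(1,b_2-3)\bigr)$ is connected and simply connected, so a covering of it is a homeomorphism onto it; being moreover holomorphic and \'etale, $\mathcal{P}$ restricts to a biholomorphism on each connected component of $\Teich_b^0$. I expect the main obstacle to be the combination of the two middle steps --- simultaneously controlling the non-Hausdorff locus of $\Teich$ and proving that enough twistor lines lift to force the covering property. This is precisely the technically demanding core of Verbitsky's argument, and in the literature it required a careful (and initially somewhat subtle) analysis of the global geometry of the space of twistor lines and of the monodromy action.
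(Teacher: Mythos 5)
The paper does not prove this statement: it is Verbitsky's Global Torelli theorem, imported as a black box from \cite{verb} (see also Huybrechts' Bourbaki expos\'e for the standard write-up), so there is no in-paper argument to compare yours against. Judged on its own terms, your outline is a faithful summary of the actual proof strategy in the literature: local Torelli via Bogomolov--Tian--Todorov unobstructedness makes $\mathcal{P}$ \'etale of the right dimension $b_2-2$; Huybrechts' non-separatedness theorem identifies the fibers of $\Teich\to\Teich_b$ with marked bimeromorphic models; the period domain retracts onto $SO(3)/SO(2)\cong S^2$ and is therefore connected and simply connected; and the covering property is forced by lifting chains of (generic) twistor lines. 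You also correctly flag the two genuinely delicate points --- that the Hausdorff reduction of a non-Hausdorff manifold need not be Hausdorff (this is exactly the issue that required Verbitsky's later erratum), and that lifting a twistor line through a given point may require first passing to a different birational model, which is why the statement is about $\Teich_b$ and not $\Teich$.

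That said, as a proof your text is a roadmap rather than an argument: the two middle steps (separatedness of $\Teich_b^0$ and the density/lifting properties of twistor lines that upgrade ``\'etale'' to ``covering'') are precisely the content of the theorem, and you describe them without establishing them --- as you yourself acknowledge. One small caveat: the finiteness of the fibers of $\Teich\to\Teich_b$ that you invoke is not actually what makes the Hausdorff reduction work (and finiteness of birational models is itself a nontrivial theorem); the correct mechanism is that inseparable points necessarily share the same period, so the equivalence relation is compatible with $\mathcal{P}$. For the purposes of this paper, which only uses the theorem as a citation, your level of detail is appropriate; it just should not be mistaken for a self-contained proof.
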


It is possible to compute the \kahl cone of a \hk manifold from numerical data on the second cohomology (see \cite{av} and \cite{mon_ka} for the general theory), the following will be needed for our main result: 

\begin{prop}\label{walls_for_two}
Let $X$ be a manifold of \kntipo, $n$ odd. Let $a\in Pic(X)$ be a class of negative square with $a^2 \geq -6-2n$ and of divisibility two. Then there are no \kahl classes orthogonal to $a$.
\end{prop}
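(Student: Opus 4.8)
The plan is to recast the statement as the claim that $a$ is a wall divisor (an MBM class in the sense of Amerik--Verbitsky, \cite{av}) and then verify this numerically. Recall that the \kahl cone of $X$ is one of the connected components (a chamber) of the positive cone $\mathcal C_X\subset H^{1,1}(X,\R)$ obtained by removing the hyperplanes $b^\perp$ for all MBM classes $b\in\Pic(X)$; this is the description of the \kahl cone recalled from \cite{av} and \cite{mon_ka}. Since a chamber is by definition disjoint from every removed hyperplane, it suffices to prove that $a$ itself is such a class: then $a^\perp$ is removed and no \kahl class can lie on it. Thus the geometric assertion reduces to showing that $a$ is a wall divisor.

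First I would use that being a wall divisor is a deformation- and monodromy-invariant property, depending only on the monodromy orbit of $a$, hence only on the pair $\bigl(q(a),\operatorname{div}(a)\bigr)$ together with the image of $a$ in the discriminant group of $H^2(X,\ZZ)$. This turns the problem into a purely lattice-theoretic question and lets me replace $X$ by any convenient model realising the given invariants, e.g. a moduli space of sheaves on a $K3$ surface.

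Next I would pass to the Mukai lattice $\widetilde\Lambda=U^4\oplus E_8(-1)^2$ with the Markman embedding $H^2(X,\ZZ)=v^\perp$, $v^2=2n-2$, used already in Definition \ref{num_stand} (see \cite{mar_tor}). The dual curve class is $R=a/\operatorname{div}(a)=a/2$, primitive in $H_2$, with $q(R)=q(a)/4$; the hypothesis $q(a)>-6-2n$ is then exactly $q(R)>-\tfrac{n+3}{2}$, which is the Bayer--Hassett--Tschinkel threshold for an extremal ray of the Mori cone of a manifold of \kntipo. The boundary value $q(a)=-6-2n$ is accounted for by $w=\tfrac12(v+a)$, which has $w^2=\tfrac14\bigl((2n-2)+q(a)\bigr)=-2$, a spherical class giving the deepest wall. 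For $q(a)$ strictly above the threshold one instead produces a spherical class, or an isotropic class with $(w,v)$ in the admissible range, inside the saturated rank-two lattice $\langle v,a\rangle^{\mathrm{sat}}$, certifying that $R^\perp=a^\perp$ supports a wall. Here the parity hypothesis enters: for $n$ odd and $\operatorname{div}(a)=2$ the discriminant of $H^2$ forces $q(a)\equiv -2(n-1)\pmod 8$ and makes the relevant glue vector $\tfrac12(v+a)$ integral in $\widetilde\Lambda$, which is precisely what renders the certifying class available.

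The main obstacle is this last, lattice-theoretic, step: verifying uniformly, for every admissible value of $q(a)$ in $(-6-2n,0)$, that $\langle v,a\rangle^{\mathrm{sat}}$ contains a $(-2)$-class or an isotropic class with $(w,v)$ in the allowed range, and that the resulting wall is a genuine (flopping or divisorial) wall of the \kahl chamber rather than a fake wall. Controlling the integrality of the glue vector and matching the discriminant class of $a$ to an MBM monodromy orbit --- the point where $n$ odd is indispensable --- is the delicate part; once it is in place, the conclusion follows from the Amerik--Verbitsky description of the \kahl cone as above.
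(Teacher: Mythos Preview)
Your strategy coincides with the paper's: show that $a$ is a wall divisor by passing to the Mukai lattice via Markman's embedding and locating a certifying class in the saturation of $\langle v,a\rangle$. You even single out the correct vector $w=\tfrac12(v+a)$.

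The only divergence is at the end. You invoke $w$ solely at the excluded boundary $q(a)=-6-2n$, where $w^2=-2$, and then flag as the ``main obstacle'' a case-by-case construction of spherical or isotropic classes for each admissible $q(a)$ strictly above the threshold, together with the worry that the resulting wall might be fake. In the paper no such obstacle arises: the \emph{same} vector $w=\tfrac12(v+a)$ works for every admissible $q(a)$. Since $a\perp v$ one has $(w,v)=v^2/2$, and the hypothesis $-6-2n<q(a)<0$ yields $-2\le w^2\le v^2/4$; the saturation of $\langle v,a\rangle$ is generated by $v$ and $w$, and these bounds are exactly the input to \cite[Thm.~5.7]{bm}, which already distinguishes genuine walls from fake ones. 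So the case analysis you anticipate, and the detour through monodromy orbits and deformation to a concrete moduli space, are both unnecessary: the paper's proof is four lines.
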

\begin{proof}
There is a canonical choice of an embedding of $H^2(X,\mathbb{Z})$ into the Mukai lattice $U^4\oplus E_8(-1)^2$ which is described by Markman \cite[Section 9]{mar_tor}. Let $\mathbb{Z}v:=(H^2)^\perp$ in this embedding. The lattice $L:=\langle v,a\rangle$ is generated by $v$ and $\frac{v+a}{2}$, whose square is at least $-2$ by hypothesis and at most $v^2/4$, therefore by \cite[Thm 5.7]{bm} $a^\perp$ is a wall for the space of positive classes, hence there cannot be a \kahl class orthogonal to it. 
\end{proof}

In particular, it follows from the results in \cite{av} and \cite{mon_ka} 
that if there is a \kahl class orthogonal to the Picard lattice, the \kahl 
cone coincides with the positive cone. \\

Some lattice theory will be used in the following, the main reference here is \cite{nik2}, where all of the following can be found. For a lattice $L$ we define the discriminant group $A_L:=L^\vee/L$. Let $l(A_L)$ denote the length of this group. If the lattice $L$ is even, $A_L$ has a bilinear form with values in $\mathbb{Q}/\mathbb{Z}$ induced from the bilinear form on $L$. This associated quadratic form is called the discriminant form of $L$ and is denoted by $q_{A_L}$. If $(l_+,l_-)$ is the signature of $L$, the integer $l_+-l_-$ is called signature of $q_{A_L}$ and, modulo $8$, it is well defined.\\

The following concerns primitive embeddings of lattices, i.e., embeddings where the quotient is torsion free:

\begin{lem}\cite[Proposition 1.15.1]{nik2}\label{lem:nik_immerge}
Let $S$ and $N$ be even lattices of signature $(s_+,s_-)$ resp. $(n_+,n_-)$. Primitive embeddings of S into $N$ are determined by the sets $(H_S,H_N,\gamma,K,\gamma_K)$, where $H_S$ and $H_N$ are subgroups of $A_S$ and $A_N$, $K$ is an even lattice with signature  $(n_+-s_+,n_--s_-)$ and discriminant form $-\delta$ where $\delta\,\cong\,(q_{A_S}\oplus -q_{A_N})_{|\Gamma_\gamma^\perp/\Gamma_\gamma}$ 
and $\gamma_K\,:\,q_K\,\rightarrow\,(-\delta)$ is an isometry. 
Moreover, two such sets $(H_S,H_N,\gamma,K,\gamma_K)$ and $(H'_S,H'_N,\gamma',K',\gamma'_K)$ determine isomorphic sublattices if and only if
\begin{itemize}
\item $H_S=\lambda H'_S$, $\lambda\in O(q_S)$,
\item $\exists\,\epsilon\,\in\,O(q_{A_N})$ and $\psi\,\in\,Isom(K,K')$ such that $\gamma'=\epsilon\circ\gamma$ and $\overline{\epsilon}\circ\gamma_K=\gamma'_K\circ\overline{\psi}$, where $\overline{\epsilon}$ and $\overline{\psi}$ are the isometries induced among discriminant groups.
\end{itemize} 
\end{lem}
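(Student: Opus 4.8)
This is Nikulin's classification of primitive embeddings, and I will indicate how the argument runs. The plan is to reduce everything to the dictionary between finite-index overlattices and isotropic subgroups of discriminant groups, and then to carry out the bookkeeping with discriminant forms. Recall that dictionary: if $M_1\subseteq M\subseteq M_1^\vee$ with $M$ even and $[M:M_1]<\infty$, then $\Gamma:=M/M_1$ is an isotropic subgroup of $A_{M_1}$, the assignment $M\mapsto\Gamma$ is a bijection between even overlattices of $M_1$ and isotropic subgroups of $A_{M_1}$, and there is a canonical isometry $q_{A_M}\cong q_{A_{M_1}}|_{\Gamma^\perp/\Gamma}$, the orthogonal being taken inside $A_{M_1}$.

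Given a primitive embedding $i\colon S\hookrightarrow N$, I would set $K:=i(S)^\perp$. Since $i(S)$ is primitive, $K$ is primitive and $S=K^\perp$, and comparing real spans gives that the signature of $K$ is $(n_+-s_+,\,n_--s_-)$. Applying the dictionary to $S\oplus K\subseteq N$ produces an isotropic $\Gamma\subseteq A_S\oplus A_K$, and the crucial local point is that primitivity of \emph{both} $S$ and $K$ in $N$ is equivalent to both projections $p_S\colon\Gamma\to A_S$ and $p_K\colon\Gamma\to A_K$ being injective; hence $\Gamma$ is the graph of an isomorphism $H_S\to H_K$ between subgroups $H_S=p_S(\Gamma)\subseteq A_S$, $H_K=p_K(\Gamma)\subseteq A_K$, and integrality of $N$ forces this isomorphism to be an anti-isometry of the restricted discriminant forms. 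Finally I would invert the gluing isometry $q_{A_N}\cong(q_{A_S}\oplus q_{A_K})|_{\Gamma^\perp/\Gamma}$: read from the other side, it recovers $q_{A_K}$ from $q_{A_S}$ and $q_{A_N}$, namely $q_{A_K}\cong-\delta$ with $\delta=(q_{A_S}\oplus -q_{A_N})|_{\Gamma_\gamma^\perp/\Gamma_\gamma}$ for a suitable isomorphism $\gamma\colon H_S\to H_N\subseteq A_N$ and graph $\Gamma_\gamma\subseteq A_S\oplus A_N$. This attaches the quintuple $(H_S,H_N,\gamma,K,\gamma_K)$ to the embedding. For the converse direction I would run the same steps backwards: a quintuple determines an anti-isometry $H_S\to H_K$, hence an isotropic $\Gamma\subseteq A_S\oplus A_K$; gluing $S\oplus K$ along $\Gamma$ yields an even lattice of signature $(n_+,n_-)$ lying in the genus of $N$, which one identifies with $N$ (here one either invokes uniqueness in the genus or reads the statement as one about the genus of $N$); the images of $S$ and $K$ recover the embedding, and the two constructions are mutually inverse.

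For the ``moreover'' part I would unwind what it means for two quintuples to give isomorphic sublattices of $N$: one embedding is carried to the other by some $\phi\in O(N)$, up to the chosen identification of $S$ with its image. Tracking $\phi$ through the dictionary, it acts on $A_S$ by some $\lambda\in O(q_{A_S})$ (which moves $H_S$), on $A_N$ by some $\epsilon\in O(q_{A_N})$, and it carries $K$ isomorphically onto $K'$ by some $\psi\in\mathrm{Isom}(K,K')$; conversely any such triple is realised, using the standard statements on lifting isometries of a lattice to its discriminant form from \cite{nik2}. This gives exactly the two compatibilities $\gamma'=\epsilon\circ\gamma$ and $\overline{\epsilon}\circ\gamma_K=\gamma'_K\circ\overline{\psi}$, together with the relation $H_S=\lambda H'_S$.

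I expect the main obstacle to be the sign- and subquotient-bookkeeping: showing that the graph condition is equivalent to, not merely implied by, primitivity of both sublattices, and verifying that inverting the gluing isometry really produces $-\delta$ together with the stated $\gamma$, rather than some other sign or subquotient. Once the lifting statements for discriminant forms are in hand, matching $O(N)$-orbits of embeddings with orbits of gluing data is routine.
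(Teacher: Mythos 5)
The paper states this lemma as a direct citation of Nikulin's Proposition 1.15.1 and gives no proof, so there is nothing internal to compare against; your sketch is a faithful reconstruction of Nikulin's standard argument (overlattices $\leftrightarrow$ isotropic subgroups, the graph of an anti-isometry, inversion of the gluing isometry to express $q_{A_K}$ as $-\delta$, and orbit-matching for the uniqueness clause), and it correctly flags the two genuinely delicate points, namely the genus-versus-isometry-class issue and the sign bookkeeping. One small crossed wire: primitivity of $S$ in $N$ corresponds to injectivity of the projection $\Gamma\to A_K$ (and primitivity of $K$ to injectivity of $\Gamma\to A_S$), not the other way around, though since $K=S^{\perp}$ is automatically primitive the combined statement you use is still valid.
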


Here $\Gamma_{\gamma}$ is the graph of $\gamma$.
When a lattice $L$ is a $G$-representation, we will call the sublattice 
$T_G(L)$ fixed by $L$ the invariant lattice, and its orthogonal complement 
$S_G(L)$ - the coinvariant lattice.

\section{Fixed loci of symplectic involutions}

We recall some properties of the irreducible components of the fixed locus 
of a symplectic involution. 

\begin{prop} \cite[Proposition 3]{cc} \label{propcc}
Let $X$ be a \hk manifold and $\iota$ be a symplectic involution on $X$. 
Then the irreducible components of the fixed locus of $\iota$ are 
symplectic subvarieties of $X$. 
\end{prop}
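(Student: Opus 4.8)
The plan is to argue pointwise along the fixed locus, using only that $\iota$ has order two and that $\iota^{*}\sigma=\sigma$, where $\sigma$ denotes a holomorphic symplectic form on $X$.

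First I would recall that the fixed locus of a finite-order holomorphic automorphism of a complex manifold is smooth. Fix $p\in X^{\iota}$. By Cartan's linearization lemma the $\langle\iota\rangle$-action near $p$ is biholomorphically conjugate to the linear action of $d\iota_{p}$ on $T_{p}X$: one produces $\langle\iota\rangle$-equivariant local coordinates by averaging arbitrary ones over the finite group. In these coordinates $X^{\iota}$ is cut out by $d\iota_{p}(v)=v$, so $X^{\iota}$ is a smooth submanifold with $T_{p}(X^{\iota})=T_{p}^{+}$, the $(+1)$-eigenspace of $d\iota_{p}$. Since $\iota^{2}=\mathrm{id}$, we have a decomposition $T_{p}X=T_{p}^{+}\oplus T_{p}^{-}$ into the $(\pm1)$-eigenspaces. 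Because $X^{\iota}$ is smooth, its irreducible components coincide with its connected components, so it suffices to treat the whole fixed locus.

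Next I would check that $T_{p}^{+}$ and $T_{p}^{-}$ are $\sigma_{p}$-orthogonal. For $v\in T_{p}^{+}$ and $w\in T_{p}^{-}$,
$$\sigma_{p}(v,w)=\sigma_{p}(d\iota_{p}v,\,d\iota_{p}w)=\sigma_{p}(v,-w)=-\sigma_{p}(v,w),$$
hence $\sigma_{p}(v,w)=0$. Since $\sigma_{p}$ is non-degenerate on $T_{p}X$ and the two eigenspaces are $\sigma_{p}$-orthogonal, $\sigma_{p}$ restricts to a non-degenerate $2$-form on $T_{p}^{+}=T_{p}(X^{\iota})$. Carrying this out at every $p\in X^{\iota}$ shows that $\sigma|_{X^{\iota}}$ is everywhere non-degenerate; as it is also closed, being the restriction of the closed form $\sigma$, it endows each connected component of $X^{\iota}$ with a holomorphic symplectic structure. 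In particular every irreducible component of the fixed locus is a smooth, even-dimensional symplectic subvariety of $X$, as claimed.

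I do not anticipate a serious obstacle. The one place where the hypotheses genuinely enter is the smoothness of $X^{\iota}$, which relies on the finiteness of $\langle\iota\rangle$ through the averaging behind Cartan's lemma — the fixed locus of an arbitrary symplectomorphism need not be smooth — together with the fact that $\sigma$ is a global form, so the eigenspace computation can be performed uniformly over all fixed points rather than just at a single one.
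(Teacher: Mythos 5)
Your proof is correct and follows essentially the same route as the paper: smoothness of the fixed locus of a periodic automorphism (you invoke Cartan's linearization lemma where the paper cites Donovan), the $\pm1$-eigenspace decomposition of $T_pX$, and the observation that $\iota^*\sigma=\sigma$ forces the two eigenspaces to be $\sigma$-orthogonal, so $\sigma$ restricts non-degenerately to the fixed locus. No gaps.
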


%\begin{proof}
%For completeness we are going to include the proof. Let $Z$ be an 
%irreducible component of the fixed locus of $\iota$. Since $\iota$ is a 
%periodic endomorphism, $Z$ is smooth by \cite{don}. After restricting to 
%$Z$, we have the orthogonal decomposition $TX_{\mid Z} = TZ \oplus N_Z$, 
%where $TZ$ and $N_Z$ are the eigenspaces corresponding to $+1$ and $-1$, 
%respectively. Since $\iota$ is a symplectic involution, then for any 
%$z \in Z$, $T_zZ$ and $N_{Z,z}$ are orthogonal and symplectic. 
%\end{proof}

The moduli space of pairs consisting of a \hk manifold of \kntiposp together 
with a symplectic involution is connected. This follows from the following 
result of the second named author \cite[Theorem 2.5]{mon2}. Here we include 
a more general version of the result where we remove the assumption that 
$n-1$ is a prime power.

\begin{thm} \label{connected}
Let $X$ be a manifold of \kntiposp or of Kummer $n$ type. Let $G\subset \mathrm{Aut}_s(X)$ be a 
finite group of numerically standard automorphisms. Then $(X,G)$ is a 
standard pair.
\end{thm}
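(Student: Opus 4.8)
The plan is to establish Theorem \ref{connected} by combining Verbitsky's Global Torelli theorem with a deformation argument that propagates the group action. First I would reduce to the case where $G$ acts faithfully on $H^2(X,\mathbb{Z})$; in the Kummer case, where the action on $H^2$ may fail to be faithful, one passes to the quotient $\bar G$ of $G$ acting on cohomology and handles the kernel separately (it consists of automorphisms acting trivially on $H^2$, which for Kummer type are understood by work on natural automorphisms). Then, using the hypothesis that $(X,G)$ is numerically standard, I would produce a marked pair: the coinvariant lattice $S_G(X)$ together with the prescribed embedding into the Mukai lattice coincides with that of a natural pair $(S^{[n]},G)$ (resp.\ $K_n(A)$), for a suitable $K3$ or abelian surface $S$ (resp.\ $A$) carrying the corresponding symplectic $G$-action. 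This is exactly what Definition \ref{num_stand} gives us.

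Next I would invoke the Global Torelli theorem (Theorem \ref{torelli}) in its equivariant form. The key point is that the period map is an isomorphism on each connected component of the birational Teichm\"uller space, and a $G$-action on $X$ that acts by a fixed monodromy-type representation $\rho$ on $H^2$ corresponds, on the level of periods, to choosing a period point $l \in \Perspace$ fixed by $\rho$, i.e.\ $l \in S_G^\perp \otimes \mathbb{C} = T_G \otimes \mathbb{C}$. The natural pair $(S^{[n]},G)$ and our pair $(X,G)$ then correspond to two points in the same connected component of the $\rho$-invariant periods, which is connected because it is an open subset of the period domain attached to the sublattice $T_G$ and these period subdomains of type $SO(2,k)/\ldots$ are connected (or have a controlled number of components matched by the monodromy). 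Hence there is a path connecting the two period points within the invariant locus. Lifting this path through the period map (on the appropriate component of $\Teich_b$) produces a family of complex structures all admitting the $G$-action, realizing a deformation of pairs from $(X,G)$ to a pair birational to $(S^{[n]},G)$.

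The remaining step is to upgrade "birational to a natural pair" to "deformation equivalent to a natural pair," which is the content of the last sentence quoted in the introduction. For this I would argue that two birational hyperk\"ahler manifolds carrying compatible $G$-actions can be connected by a $G$-equivariant deformation: a general deformation of $X$ in a direction inside $T_G$ has Picard lattice equal to $T_G$ (or a minimal overlattice), and Proposition \ref{walls_for_two} together with the \kahl cone description in \cite{av}, \cite{mon_ka} shows that for such a general member the \kahl cone agrees with the positive cone, so the birational map becomes a biregular isomorphism in the nearby fibers. Running this for both $X$ and $S^{[n]}$ and matching up the general fibers yields the deformation equivalence of pairs, so $(X,G)$ is standard.

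I expect the main obstacle to be the control of connectedness of the equivariant moduli space, i.e.\ showing the $\rho$-invariant period domain meets only one component of $\Teich_b$ modulo the equivariant monodromy group, and simultaneously ensuring the monodromy used to identify the representations in Definition \ref{num_stand} is realized by an actual diffeomorphism commuting with $G$ up to isotopy. This is where the subtle lattice-theoretic input (Lemma \ref{lem:nik_immerge} on primitive embeddings, and the Markman/Wieneck Mukai-lattice normalization) is essential: one must check that once $S_G$, $T_G$ and the Mukai-lattice embeddings agree, the pair of marked $G$-manifolds lie in one orbit of the equivariant monodromy, so that Torelli applies on a single component. The case distinction between $K3^{[n]}$ type and Kummer $n$ type enters only here, through the different shapes of the Mukai lattice and the non-faithfulness of the Kummer action, but the structure of the argument is uniform.
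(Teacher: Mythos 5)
Your overall strategy is the same as the paper's: first use the numerically standard hypothesis plus Verbitsky's Global Torelli (Theorem \ref{torelli}) to show $(X,G)$ is birational to a natural pair via a $G$-equivariant Hodge isometry that extends to the Mukai lattice, and then upgrade birationality to deformation equivalence of pairs by passing to a very general member of the $G$-equivariant deformation family, where the \kahl cone is the whole positive cone and birational manifolds become isomorphic. You also correctly identify the role of the Mukai-lattice normalization of Markman/Wieneck: it is exactly what places the two marked pairs in a single connected component of the Teichm\"uller space so that Torelli applies. (The paper realizes the first step slightly more directly, by deforming $X$ until $\Pic(X)=S_G(X)\oplus\mathbb{Z}\delta$ and then constructing the surface $S$ so that its transcendental period literally matches that of $X$, rather than lifting a path in the invariant period domain; but these are the same idea.)

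One concrete correction is needed in your last step: you write that a general $G$-equivariant deformation of $X$ has Picard lattice equal to $T_G$. It is the opposite: the $G$-invariant periods sweep out $T_G\otimes\mathbb{C}$, so the very general such fiber has $\Pic = S_G(X)$, the \emph{coinvariant} lattice. The reason the \kahl cone is then the full positive cone is that a $G$-invariant \kahl class lies in $T_G\otimes\mathbb{R}$, hence is orthogonal to $\Pic=S_G$; with your statement as written ($\Pic=T_G$) the \kahl class would sit inside the Picard lattice and the argument would not go through. Relatedly, the input here is not Proposition \ref{walls_for_two} (which is a divisibility-two wall computation used later, in the proof of Corollary \ref{one_invol}) but the general fact from \cite{av} and \cite{mon_ka} that a \kahl class orthogonal to the Picard lattice forces the \kahl cone to coincide with the positive cone. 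With these two points fixed, your argument matches the paper's proof.
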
  

\begin{proof}
First, we want to prove that $(X,G)$ is birational to a standard pair by using the Global Torelli theorem, and then we want to prove that birational pairs can be deformed one into the other while preserving the group action.
For the first step, up to deforming $X$, we can suppose that $\Pic(X):=S_G(X)\oplus \mathbb{Z}\delta$, where $\delta\subset T_G(X)$ is as in Definition \ref{num_stand}. Let $S$ be the $K3$ (resp. Abelian) surface with a $G$-action such that $NS(S)=S_G(S)=S_G(X)$ and $T_G(S)=T(S)=T(X)$. The $K3$ (resp. Abelian) surface $S$ is uniquely determined (resp. up to duality) if, under the identification $T(S)=T(X)$, we have the equality of the symplectic forms $\sigma_S=\sigma_X$ in $T(X)\otimes\mathbb{C}$ (here we can take the normalized symplectic forms, so that $\int_X (\sigma_X\overline{\sigma}_X)^{\mathrm{dim}(X)}=\int_S \sigma_S\overline{\sigma}_S=1)$. We have therefore $H^2(S)=\delta^\perp$ under the above identification (resp. $K_n(S)$) and $X$ and $S^{[n]}$ are Hodge isometric as we have $H^2(S^{[n]})=\delta^\perp\oplus \mathbb{Z}\delta$. By the requirement of Definition \ref{num_stand}, this Hodge isometry extends to an isometry of the Mukai lattice, so with a suitable choice of markings $f,g$ the pairs $(X,f)$ and $(S^{[n]},g)$ (resp. $K_n(S)$) are in the same component of the Teichmuller space of $X$, thus by Theorem \ref{torelli} they are birational and the birational map commutes with the $G$-action by our construction of the Hodge isometry.

Now we continue with the second step. Let $U$ be a small open subset of $H^1(\mathcal{T}_X)$, that is a family of local deformations of $X$, with total family $\mathcal{X}$. By \cite[Theorem 4.3 and proposition 4.5]{huyb} there exists $V$ and a local deformation of $Y$ over $V$, with total family $\mathcal{Y}$ such that $U\cong V$ and a fibrewise birational map $\varphi:\,\mathcal{X}\dashrightarrow \mathcal{Y}$. Let us restrict to the local deformations of the pairs $(X,G)$ and $(Y,G)$, which are the families over $U^G$ (which coincides with $V^G$, as the isomorphism $U\cong V$ is a restriction of the $G$-equivariant isomorphism $H^1(\mathcal{T}_X)\cong H^1(\mathcal{T}_Y)$ of first order deformations). Let $t\in U^G$ be a point such that $\Pic(\mathcal{X}_t)=S_G(\mathcal{X}_t)$, which is true for very general points as the locus where the Picard number increases are union of closed subsets. There is a \kahl class orthogonal to $S_G(\mathcal{X}_t)$ and this class lies in the orthogonal complement to $\Pic(\mathcal{X}_t)$. Thus, the \kahl cone is the full positive cone and all manifolds birational to $\mathcal{X}_t$ are isomorphic to it, so $\mathcal{X}_t\cong \mathcal{Y}_t$. This isomorphism acts on the second cohomology by extending the identifications $H^1(\mathcal{T}_X)\cong H^1(\mathcal{T}_Y)$, which contain the non trivial part of the $G$ action, therefore this isomorphism is also $G$-equivariant. This finally implies that the pairs $(X,G)$ and $(Y,G)$ are both deformation equivalent to $(\mathcal{X}_t,G)$, and our claim holds. 
\end{proof}
For further details on deformations of pairs $(X,G)$ and their moduli spaces, the interested reader can consult the recent \cite[Section 3.1 and 3.2]{BC}.

\begin{cor}\label{one_invol}
Let $X$ be a manifold of \kntiposp and let $\iota$ be a symplectic involution. 
Then the fixed locus of $\iota$ is a deformation of the fixed locus on 
$S^{[n]}$ of a symplectic involution coming from the $K3$ surface $S$.  
\end{cor}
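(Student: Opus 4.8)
The plan is to deduce Corollary \ref{one_invol} directly from Theorem \ref{connected} together with Nikulin's classification of symplectic involutions. First I would check that the pair $(X,\iota)$ is numerically standard in the sense of Definition \ref{num_stand}. Since $\iota$ is symplectic, its action on $H^2(X,\mathbb{Z})$ is the identity on $T_\iota(X)$, and the coinvariant lattice $S_\iota(X)$ is negative definite and contained in the Picard lattice. The key input is Nikulin's theorem \cite{nik}: there is, up to isometry, a unique lattice arising as the coinvariant lattice of a symplectic involution on a $K3$ surface, namely $E_8(-2)$, and every symplectic involution on a $K3$ realizes it. So I would first argue that $S_\iota(X)\cong E_8(-2)$ as well; this should follow from embedding $H^2(X,\mathbb{Z})$ into the Mukai lattice $U^4\oplus E_8(-1)^2$ à la Markman \cite{mar_tor}, extending the involution to the Mukai lattice (using that it fixes the vector $v$ spanning $(H^2)^\perp$, as $\iota$ acts trivially on effective classes hence on the relevant algebraic data), and invoking the uniqueness of the coinvariant lattice for an involution on the full Mukai lattice. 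Then $T_\iota(X)$ contains $\delta$ of square $-2(n-1)$ and divisibility $2$, and the orthogonal complement of $\delta$ inside $T_\iota(X)$ is Hodge-isometric to the transcendental lattice plus the invariant $K3$ lattice part, matching the structure required by Definition \ref{num_stand}.

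Having established that $(X,\iota)$ is numerically standard, Theorem \ref{connected} applies verbatim with $G=\mathbb{Z}_2$ generated by $\iota$, and gives that $(X,\iota)$ is deformation equivalent to a natural pair $(S^{[n]},G)$ where $G$ is induced by a symplectic involution on the $K3$ surface $S$. Since the fixed locus is a deformation invariant in a family of pairs — the fixed locus of $G$ on the total space $\mathcal{X}$ is proper over the base and smooth fibrewise by \cite{don} as recalled in the proof of Proposition \ref{propcc}, so by Ehresmann the fibres over a connected base are all diffeomorphic — the fixed locus of $\iota$ on $X$ is, up to deformation, exactly the fixed locus of the induced involution on $S^{[n]}$. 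This is the content of the corollary.

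I would organize the write-up as: (1) recall that $\iota$ acts trivially on $H^{2,0}$ and on effective classes, hence on the part of $H^2$ coming from $\mathrm{NS}$ of the ``base K3'' once we are in the Mukai picture; (2) cite Nikulin to pin down $S_\iota=E_8(-2)$; (3) verify the three bullet points of Definition \ref{num_stand}, the third being the extension-to-Mukai-lattice condition, which holds because the involution on the Mukai lattice is the obvious one induced from the $K3$; (4) apply Theorem \ref{connected}; (5) invoke deformation invariance of the fixed locus.

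The main obstacle will be step (3), and more precisely verifying the Mukai-lattice compatibility condition: one must show that the abstract Hodge isometry between $T_\iota(X)$ and $T_G(S^{[n]})$ (and between the coinvariant lattices) is induced by an isometry of the Mukai lattices that is equivariant for the two $\mathbb{Z}_2$-actions. This requires knowing that the $\iota$-action on $H^2(X,\mathbb{Z})$, once embedded in $U^4\oplus E_8(-1)^2$, extends to an isometry that fixes the primitive vector $v$ with $v^2=2n-2$ and restricts on $v^\perp/\mathbb{Z}v$-data to the standard involution; the paper hints (Proposition \ref{mukai_not}, not yet stated) that this step is ``often unnecessary'' or can be bypassed, so I expect the actual argument either appeals to uniqueness of the coinvariant sublattice of the Mukai lattice under the involution (there being a single orbit under the relevant orthogonal group) or defers to that forthcoming proposition. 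The remaining steps are formal given the machinery already assembled in the excerpt.
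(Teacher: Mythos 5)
Your overall architecture (verify numerical standardness, apply Theorem \ref{connected}, conclude) matches the paper's, and your identification of $S_\iota(X)\cong E_8(-2)$ as the key lattice-theoretic input is correct (the paper simply cites \cite[Cor.\ 37]{mon3} for this rather than rederiving it from the Mukai lattice). But there is a genuine gap exactly at the point you flag as ``the main obstacle,'' and neither of the two escape routes you propose closes it. You hope that numerical standardness follows from uniqueness of the relevant embedding --- ``there being a single orbit under the relevant orthogonal group'' --- or alternatively from Proposition \ref{mukai_not}. In fact, Proposition \ref{count_embed} in the appendix shows that for $n$ odd there are \emph{two} non-isomorphic primitive embeddings of $E_8(-2)$ into $L_n=H^2(X,\mathbb{Z})$: the standard one, and an exotic one characterized by containing a class $v$ of divisibility $2$ and square at least $-6-2n$. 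So uniqueness fails as a purely lattice-theoretic statement, and Proposition \ref{mukai_not} does not help here either: it only bypasses the third bullet of Definition \ref{num_stand} (the extension-to-Mukai-lattice condition) \emph{after} the $H^2$ representations are already known to match, which is precisely what is in question.

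The missing idea is geometric, not lattice-theoretic: one must exclude the exotic embedding by observing that a symplectic involution always admits an invariant K\"ahler class (average any K\"ahler class over the group), which is necessarily orthogonal to the coinvariant lattice $S_\iota(X)\subset\mathrm{Pic}(X)$; if $S_\iota(X)$ contained a divisibility-$2$ class $v$ with $v^2>-6-2n$, this would contradict Proposition \ref{walls_for_two}, which says (via the Bayer--Macr\`i wall structure) that no K\"ahler class can be orthogonal to such a $v$. Without this step your argument establishes numerical standardness only for $n$ even (where the embedding really is unique) and is incomplete for $n$ odd. Your final step --- deformation invariance of the fixed locus along a family of pairs, via smoothness of the fixed locus and properness --- is a reasonable way to make explicit something the paper leaves implicit, and is fine as stated.
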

\begin{proof}
By \cite[Cor. 37]{mon3}, the coinvariant lattice of a symplectic involution is always isometric to $E_8(-2)$. Thus, by Proposition \ref{count_embed}, there is an embedding of $E_8(-2)$ inside $H^2(X)$ which is numerically standard and all others have an element $v$ inside $E_8(-2)$ which has divisibility $2$ and is of square at least $-6-2n$. The latter case cannot be induced by an involution on a manifold of \kntipo, because otherwise an invariant \kahl class would be orthogonal to $v$ and this is in contradiction with Proposition \ref{walls_for_two}.  
% When $n$ is odd, there are potentially several different embeddings of this lattice inside $H^2(X,\mathbb{Z})$, as there are two possible choices for the subgroups $H_S$ used in \ref{lem:nik_immerge}. If $H_S$ is non trivial, there will be a generator of the discriminant group of $E_8(-2)$ however (add lemma for this) only the one with transcendental lattice $U^3\oplus E_8(-2)\oplus (-2n+2)$ is admissible by Lemma \ref{lem:embeddings2}. 
Thus, all pairs $(X,\iota)$ are numerically standard, therefore Thm. \ref{connected} applies and we obtain our claim.
\end{proof}

\begin{cor}\label{one_invol_kum}
Let $X$ be a manifold of Kummer $n$ type and let $\iota$ be a symplectic involution. 
Then the fixed locus of $\iota$ is a deformation of the fixed locus on 
$K_n(A)$ of the $-1$ involution coming from the Abelian surface $A$.  

\end{cor}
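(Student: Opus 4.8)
We may assume $n\ge 2$, so that $H^2(X,\mathbb{Z})$ has signature $(3,4)$. The plan is to mirror the proof of Corollary \ref{one_invol}: it suffices to show that every pair $(X,\iota)$, with $\iota$ a symplectic involution on a manifold of Kummer $n$ type, is numerically standard in the sense of Definition \ref{num_stand}; then Theorem \ref{connected} gives that $(X,\iota)$ is a standard pair, \ie deformation equivalent as a pair to $(K_n(A),\langle\alpha\rangle)$ for some abelian surface $A$ and some symplectic automorphism $\alpha$ of $A$ fixing $0$. A symplectic group automorphism of order $2$ of an abelian surface has determinant $1$ on $H^{1,0}$, hence acts as $\pm 1$ on $H^{1,0}$ and so as $\pm 1$ on all of $H^1$; the only nontrivial such involution is $\alpha=-1$, so $(X,\iota)$ is deformation equivalent to $(K_n(A),-1)$. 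Finally, for a deformation of pairs $\mathcal{X}\to B$ with $B$ connected the fixed locus of the $\mathbb{Z}_2$-action on $\mathcal{X}$ is smooth by \cite{don}, and is smooth over $B$ since the action is trivial on $B$; its fibres, the fixed loci of the individual pairs, are therefore all deformation equivalent, which is exactly the assertion.

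The content of the argument is the verification that $(X,\iota)$ is numerically standard, and --- in contrast with the \kntipo situation, where the coinvariant lattice equals $E_8(-2)$ --- the point here is that $\iota$ must act trivially on $H^2(X,\mathbb{Z})$, equivalently $S_\iota(X)=0$; indeed the sign change of $A$ acts trivially on $H^2(A,\mathbb{Z})$ and fixes $\delta$, so the coinvariant lattice of a standard involution is already trivial. First I would note that, $\iota$ being symplectic, $S_\iota(X)$ is contained in $H^{1,1}(X)\cap H^2(X,\mathbb{Z})=\Pic(X)$, is negative definite of rank at most $4$, and contains no class of square $-2$: such a class would give a reflection in $O(H^2(X,\mathbb{Z}))$ compatible with the Hodge structure and fixing the symplectic form, which is ruled out by the description of coinvariant lattices of symplectic isometries in \cite{mon3}. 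Averaging a K\"ahler class over $\langle\iota\rangle$ produces an $\iota$-invariant K\"ahler class, which lies in $T_\iota(X)\otimes\mathbb{R}$ and is therefore orthogonal to $S_\iota(X)$. By the wall-divisor description of the K\"ahler cone of manifolds of Kummer $n$ type --- the analogue of Proposition \ref{walls_for_two}, obtained from \cite{mon_ka} and the Kummer counterpart of \cite[Thm 5.7]{bm} --- a nonzero primitive class of $S_\iota(X)$ of sufficiently small negative square and appropriate divisibility would have orthogonal complement a wall of the positive cone, contradicting the existence of such an invariant K\"ahler class; a lattice-theoretic check, using that $S_\iota(X)$ embeds primitively in $U^3\oplus\langle -2(n+1)\rangle$ and has no $(-2)$-classes, then rules out the remaining lattices, so $S_\iota(X)=0$. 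Granting $S_\iota(X)=0$, the first two conditions of Definition \ref{num_stand} hold with $S$ any abelian surface carrying the sign change and with $T(S)$ Hodge isometric to $T(X)$, while the third follows from Wieneck's description of the embedding $H^2\hookrightarrow U^4$ \cite{wie}, both involutions acting trivially on the Mukai lattice. Hence $(X,\iota)$ is numerically standard.

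The step I expect to be the main obstacle is the lattice-theoretic input of the previous paragraph: one must make the wall-divisor bound for Kummer $n$ type explicit and uniform enough in $n$ to eliminate every negative definite, primitively embedded sublattice of $U^3\oplus\langle -2(n+1)\rangle$ of rank at most $4$ with no $(-2)$-classes, and hence to exclude any nontrivial $\iota$-action on $H^2(X,\mathbb{Z})$. A related and more delicate point is the Mukai-lattice clause of Definition \ref{num_stand}: even once the action on $H^2$ is trivial, one must show that the canonical embedding $H^2\hookrightarrow U^4$ of \cite{wie} is compatible with $\iota$ up to the monodromy group of Kummer $n$ type --- which needs a transitivity property of that group on the relevant embeddings --- so that Theorem \ref{connected} can indeed be applied to conclude that $(X,\iota)$ is a deformation of $(K_n(A),-1)$.
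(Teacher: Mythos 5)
Your overall strategy (reduce to numerical standardness, invoke Theorem \ref{connected}, identify the only order-two symplectic automorphism of $(A,0)$ as $-1$) matches the paper's, but the central step --- proving that $\iota$ acts trivially on $H^2(X,\mathbb{Z})$ --- is left as an acknowledged ``main obstacle,'' and the route you propose for it would not close. You want to show $S_\iota(X)=0$ by purely lattice-theoretic means: $S_\iota(X)$ is negative definite of rank $\le 4$, primitively embedded in $U^3\oplus\langle -2n-2\rangle$, orthogonal to an invariant K\"ahler class, and containing no wall classes. The difficulty is that this set of conditions does \emph{not} force $S_\iota(X)=0$: by the classification in \cite[Sections 5 and 6]{mtw} there is exactly one nontrivial symplectic involution of $H^2$ of a Kummer $n$ type manifold satisfying all such numerical constraints, and it is actually realized by automorphisms of $X$ --- hence it preserves a K\"ahler class and is orthogonal to no wall. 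So no wall-divisor or discriminant-form computation can exclude it. What excludes it is a fact invisible at the level of $H^2$: by \cite[Prop.\ 6.1]{mtw} any automorphism of $X$ inducing that involution on $H^2$ has order four on $X$ (the kernel of $\mathrm{Aut}(X)\to O(H^2(X,\mathbb{Z}))$ is nontrivial for Kummer type manifolds, unlike the \kntiposp case). Therefore an honest involution of $X$ must act trivially on $H^2$. This is the key input of the paper's proof, and your argument is missing it; as written, the ``lattice-theoretic check'' you defer would terminate in a non-contradiction.

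Two smaller points. Your appeal to \cite{mon3} to exclude $(-2)$-classes in $S_\iota(X)$ is misplaced: that reference treats coinvariant lattices for \kntiposp manifolds, and in the Kummer setting the numerical description of wall divisors is different, so the exclusion would have to be re-derived (and, per the above, it cannot carry the whole argument anyway). On the other hand, once triviality of the $H^2$-action is granted, your handling of the remaining steps --- the Mukai-lattice clause via \cite{wie}, the identification of $\alpha=-1$, and the deformation-invariance of the fixed locus via smoothness of the fixed locus in the total space of a $\mathbb{Z}_2$-equivariant family --- is sound and in fact spells out details the paper leaves implicit.
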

\begin{proof}
All possible symplectic involutions on the second cohomology of a manifold of Kummer $n$ type $X$ have been classified in \cite[Section 5 and 6]{mtw}. Notice that there is only one such involution by \cite[Prop. 6.1]{mtw}, which however acts as an order four automorphism on $X$, therefore a symplectic involution on $X$ must have a trivial action on $H^2(X)$. By Theorem \ref{connected}, the pair $(X,\iota)$ is deformation equivalent to the pair $(K_n(A),-1)$ and our claim holds.
\end{proof}

Thus we see that if \(\iota\) acts trivially on \(H_2(X)\) then \(n\) is odd and the involution is induced by the shift \(\tau\) by an order element of \(A\).
The quotient of \(A\) by the action of \(\tau\) is an abelian variety \(\tilde{A}\). The fixed locus of \(\iota\) consists of \(8\) copies \(K_{(n-1)/2}(\tilde{A})\).
Indeed, in the case \(n=2\) the fixed locus consist unordered  pairs of points \((z,z+\tau)\), \(z\in A\) that satisfy equation
\(2z+\tau=0\). There are exactly \(8\) solutions for the last equation in \(\tilde{A}\).

For the general \(n\) the fixed locus is the closure inside \(A^{[n]}\)
of set of unordered
distinct \(n\)-tuples \((z_1,z_1+\tau,\dots,z_{(n-1)/2},z_{(n-1)/2}+\tau)\), \(z_i\in\) that satisfy equation \(2(z_1+\dots+z_{(n-1)/2})=\tau(1-n)/2\).
The last equation translates to the equation on \(\tilde{A}\) of the form \(\tilde{z}_1+\dots+\tilde{z}_{(n-1)/2}=\xi_k\), \(k=1,\dots,8\) where \(\xi_k\) are the images of
the order two points under the quotient map \(A\to \tilde{A}\).

\begin{ex}
Consider $\Sym^2(S)$, where $S$ is a $K3$ surface with a symplectic involution 
$i$. The involution $i$ induces an involution $\iota$ on $\Sym^2(S)$. 
The fixed locus of $\iota$ has a component of the form $S/i$, because 
locally it consists of unordered pairs of points $\{p, i(p)\}$, where 
$p \in S$. 
The rest of the fixed points are isolated unordered pairs of the form 
$\{p, q\}$, where $p$ and $q$ are fixed points of $i$ on $S$, with 
possible repetitions. In \cite{nik} Nikulin proved that $i$ has $8$ fixed 
points on $S$. In \cite{mon1} the second named author proved that 
the fixed locus of $\iota$ on $S^{[2]}$ consists of $28 = \binom{8}{2}$ 
isolated points and one copy of $S$. The eight fixed points of type 
$\{p, p\}$ are contained in the minimal resolution of $S/i$ which is $S$. 
\end{ex}

\begin{ex}
Consider $\Sym^3(S)$, where $S$ is a $K3$ surface with a symplectic involution 
$i$. The involution $i$ induces an involution $\iota$ on $\Sym^3(S)$. 
The fixed locus of largest dimension of $\iota$ locally looks like 
$\{p, i(p), q \}$, where $p \in S$ and $q$ is a fixed point of $i$ on $S$. 
There are $8$ connected components of fixed loci of the form $S/i$, 
because there are $8$ possibilities for $q$ by Nikulin's result \cite{nik}. 
The rest of the fixed points are isolated of the form $\{p, q, r \}$, 
where $p$, $q$ and $r$ are fixed points of $i$ on $S$. In total, there are 
$56$ points on $S^{[3]}$ corresponding to triples consisting of three 
different points $\{p, q, r \}$, and all fixed points of the form $\{p,p,q\}$ 
with $p\neq q$ are contained in the resolution of $S/\iota$. There are eight 
more isolated fixed points given by schemes fully supported on one point whose 
reduced scheme structure contains all possible tangent directions. 
\end{ex} 

The two examples above illustrate the difference between the even and the 
odd cases of $n$ when considering symplectic involutions on $S^{[n]}$ and 
are indicative of the approach towards the following theorem. 

\begin{thm}\label{counting}
Let $X$ be a hyperk\"ahler manifold of \kntipo, and let $\iota$ be a
symplectic involution on $X$. Then, up to deformation, the fixed locus $F$ of $\iota$ consists of 
finitely many copies of Hilbert schemes of $K3$ surfaces $S^{[m]}$ 
(where $m \leq \frac{n}{2}$) and possibly isolated fixed points 
(only when $n \leq 24$). The fixed locus $F$ 
is decomposed into loci of even dimensions $F_{2m}$, where $\max (0, 
\frac{n}{2} - 12) \leq m \leq \frac{n}{2}$. Each fixed locus $F_{2m}$ of 
dimension $2m$ has
$$\sum_{2m=n-k-2l}{8 \choose k} { k \choose l}$$ connected components, each one of 
which is a deformation of a copy of $Y^{[m]}$, where $Y$ is the $K3$ resolution of $S/\iota$. In particular, 
the fixed locus $Z$ of largest dimension is the following.

({\it i}) If $n$ is even, then $Z$ consists of one copy of $Y^{[\frac{n}{2}]}$; 

({\it ii}) If $n$ is odd, then $Z$ consists of $8$ copies of 
$Y^{[\frac{n-1}{2}]}$. 
\end{thm}

\begin{proof}
From Corollary \ref{one_invol}, we can restrict to the case when $X=S^{[n]}$ and 
$\iota$ comes from a symplectic involution on $S$. As the fixed locus of an automorphism deforms smoothly in an equivariant family, our claim will follow if we prove it in this setting. Let us denote by $Y$ the minimal resolution of $S/\iota$. We will also prove that all components of the fixed locus are Hilbert schemes over $Y$. 
Since $\iota$ on $X$ comes from an automorphism of $S$, it also factors through the Hilbert-Chow morphism and it is defined also on $\Sym^n S$. 

By Proposition \ref{propcc}, the irreducible components of the fixed locus of 
$\iota$ are symplectic subvarieties of $X$, and in this case each one of them 
has even dimension $2m$. Let us label the fixed locus of dimension $2m$ by 
$F_{2m}$. By Nikulin's theorem in \cite{nik}, the symplectic involution on 
the $K3$ has $8$ fixed points $f_1, \cdots, f_8$. A fixed point of $\iota$ on $\Sym^n(S)$ is therefore supported on the fixed points and on pairs of points $(p,\iota(p))$. Therefore, on $\Sym^n(S)$ each component 
$F_{2m}$ is isomorphic to $\Sym^m S/\iota \times \Sym^{n-2m}(f_1 \cup \cdots \cup f_8)$. 
Let $l_1, \cdots, l_8$ be the degrees of $f_1, \cdots, f_8$ in $F_{2m}$, that is their coefficients in $\Sym^{n-2m}(f_1 \cup \cdots \cup f_8)$.

Let us denote by $U_i$ some small analytic neighborhoods around $f_i$. Since $S$ is connected and by the structure of $F_{2m}$,
we can connect any involution fixed point on $\Sym^n(S)$ to a point inside $(U_1)^{[n_1]}\times\dots\times (U_8)^{[n_8]}$
for some $n_i\ge 0$, by moving the points inside $\Sym^m S/\iota$. Respectively, we can now use the computations of Appendix B %~\ref{sec:irred}
in this analytic neighbourhoods, so that the fixed locus inside them are the products
$U_{\vec{n},\vec{s}}=\left((U_1)^{[n_1]}\right)_{s_1}^\inv\times\dots\times \left((U_8)^{[n_8]}\right)_{s_8}^\inv$
where $s_i$ is $\pm$ if $n_i$ is odd and $s_i=\emptyset$ if $n_i$ is even (see Lemma \ref{lem:quiv} and  the last formula in  Appendix B).
Let $k(\vec{n},\vec{s})$ be a number of odd $n_i$  and $l(\vec{n},\vec{s})$ is the number of $i$ such that $s_i=-$. Then the dimension of
$U_{\vec{n},\vec{s}}$ is $n-k(\vec{n},\vec{s})-2l(\vec{n},\vec{s})$.

Analogously, we can move a pair of points $(z,\iota(z))$ from one neighborhood $U_i$ to another $U_j$, thus we connect
analytic sets $U_{\vec{n},\vec{s}}$ and $ U_{\vec{n'},\vec{s'}}$ as long as $k(\vec{n},\vec{s})=k(\vec{n'},\vec{s'})$ and
$l(\vec{n},\vec{s})=l(\vec{n'},\vec{s'})$. On the other hand, it's also clear that the if we can connect
analytic sets $U_{\vec{n},\vec{s}}$ and $ U_{\vec{n'},\vec{s'}}$ then $k(\vec{n},\vec{s})=k(\vec{n'},\vec{s'})$, because we
can only move points between the neighborhoods in pairs. Finally, if the invariant $l(\cdot,\cdot)$ changes
along a path then the dimension of the connected component would change too. Thus our formula for the number of connected components of $F_{2m}$ reduces to count pairs $k(\vec{n},\vec{s})$ and $l(\vec{n},\vec{s})$ such that $2m=n-k(\vec{n},\vec{s})-2l(\vec{n},\vec{s})$. For a fixed numer $k=k(\vec{n},\vec{s})$, there are ${k\choose l}$ possibilities for $(\vec{n},\vec{s})$ that give $l=l(\vec{n},\vec{s})$ and clearly for every $k$ there are ${8\choose k}$ possibilities for $(\vec{n},\vec{s})$ that give $k=k(\vec{n},\vec{s})$. Therefore we proved our formula
for the number of connected components.

% Each degree $l_i 
% = 0$ or $1$, because if $l_i \geq 2$, then $l_i f_i$ would belong to the 
% closure of another component. Indeed, in the space of cycles 
% $2f_i = \lim_{p \rightarrow f_i} p + \iota(p)$ since $\iota(f_i) = f_i$. The 
% sum of the degrees $l_1 + \cdots + l_8 = n-2m$ and since $l_i = 0$ or $1$, 
% we obtain the restriction $n-2m \leq 8$, or equivalently, $m \geq 
% \frac{n}{2} - 4$. If $m < \frac{n}{2} - 4$, then $F_{2m} = \emptyset$. Thus, 
% the precise bounds for $m$ are: $\max (0, \frac{n}{2} - 4) \leq m 
% \leq \frac{n}{2}$. In order to count the connected components of $F_{2m}$, 
% notice that we are counting the ways that $n-2m$ of the numbers 
% $l_1, \cdots, l_8$ are non-zero, which is $8 \choose {n-2m}$. When we take 
% the minimal resolution of each strata $F_{2m} = \Sym^m S/\iota \times 
% \Sym^{n-2m}(f_1 \cup \cdots \cup f_8)$, we would obtain $8 \choose {n-2m}$ 
% copies of $Y^{[m]} = \widetilde{\Sym^m S/\iota}$. 

Now we shall describe explicitely the fixed locus $Z = F_{2[\frac{n}{2}]}$ 
of largest dimension. Let $m = [\frac{n}{2}]$ be the largest integer not 
greater than $\frac{n}{2}$, i.e., 
$m=\frac{n}{2}$ if $n$ is even and $m=\frac{n-1}{2}$ if $n$ is odd. 
Then the fixed locus of largest dimension is of the form 
$\{ x_1, \iota(x_1), x_2, \iota(x_2), \cdots, x_m, \iota(x_m)  \}$ if 
$n$ is even, i.e., one copy of $\Sym^m S/\iota$, where $x_i \in S$ for 
$1 \leq i \leq m$. In the case when $n$ is odd, 
the fixed locus of largest dimension is of the form 
$\{ x_1, \iota(x_1), x_2, \iota(x_2), \cdots, x_m, \iota(x_m), x_{m+1} \}$, 
where $x_i \in S$ for $1 \leq i \leq m$ and $x_{m+1}$ is a fixed point, i.e., 
$Z$ contains eight copies of 
$\Sym^m S/\iota$ since there are $8$ choices for 
$x_{m+1}$. In both cases, the dimension of $\Sym^m S/\iota$ is $2m$. 
%The rest of the components in the fixed locus will be stratified with 
%dimensions $2(m-1), 2(m-2), \cdots, 2, 0$. We can count them using 
%combinatorics - by counting unordered $k$-tuples with possible repetitions, 
%where we have $8$ choices of fixed points coming from the $K3$ surface $S$. 
%Since the Hilbert scheme $S^{[n]}$ is a minimal resolution of $\Sym^n(S)$, 
%the fixed locus there will consist of isolated points and a collection of 
%copies of $S^{[m]}$ with $ 0 \leq m \leq \frac{n}{2}$. 
\end{proof}

%\begin{oss}
%As a special case of this theorem when $n=2$ we obtain the description 
%of the fixed locus $F$ of $\iota$ on $S^{[2]}$ that the second named author 
%proved in \cite{mon1}, namely that $F$ consists of $28 = \binom{8}{2}$ 
%isolated points and one copy of the minimal resolution of $S/\iota$. 
%\end{oss}

\begin{oss}
  The involution fixed locus does not have a zero dimensional component if 
$n>24$. On the other hand, 
  $\left(\Kth^{[24]}\right)^{\inv}$ has a zero dimensional component which is 
the product of eight squares of maximal ideals of the involution fixed locus. 
\end{oss}

Finally, let us conclude with the analogous result in the Kummer case. To state the combinatorial part of the result
we need a few extra notations. Given a subset $I\subset \ZZ_2^4$, we denote by $|I|$ the size of the set and $||I||$ the element
of $\ZZ^4_2$ which is the product of all the elements in the set. Thus, 
we define:
$$N^n_m=\sum_{I,||I||=1}{(n-|I|)/2-m \choose |I|}.$$

\begin{thm}\label{counting_kum}
Let $X$ be a hyperk\"ahler manifold of Kummer $n$ type, and let $\iota$ be a
symplectic involution on $X$ that acts nontrivially on \(H_2(X)\).
Then, up to deformation, the fixed locus $F$ of $\iota$ consists of 
finitely many copies of Hilbert schemes of $K3$ surfaces $S^{[m]}$ 
(where $m \leq \frac{n+1}{2}$) and possibly isolated fixed points 
(only when $n \leq 48$). The fixed locus $F$ 
is stratified into loci of even dimensions $F_{2m}$, where $\max (0, 
\frac{n+1}{2} - 24) \leq m \leq \frac{n+1}{2}$. Each fixed locus $F_{2m}$ of 
dimension $2m$ has $N_m^{n+1}$ connected components, each one of 
which is a deformation of a copy of $S^{[m]}$. In particular, 
the fixed locus $Z$ of largest dimension is one copy of $S^{[\frac{n+1}{2}]}$.
\end{thm}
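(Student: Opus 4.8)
\section*{Proof proposal for Theorem \ref{counting_kum}}

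The plan is to mirror the proof of Theorem \ref{counting}, replacing $S^{[n]}$ by the generalized Kummer variety $K_n(A)\subset A^{[n+1]}$ and the symplectic involution on a $K3$ surface by the sign involution $-1\colon A\to A$. By Corollary \ref{one_invol_kum} it suffices to treat $X=K_n(A)$ with $\iota$ induced by $-1$. Since $\iota$ acts as the identity on the effective classes of $H^2(X,\R)$, it preserves the exceptional divisor of $A^{[n+1]}$ and hence descends to an action on the fibre over $0\in A$ of the summation map $\Sym^{n+1}(A)\to A$. The fixed point set of $-1$ on $A$ consists of the $16$ two-torsion points $A[2]\cong\ZZ_2^4$, and in a small $\iota$-invariant neighbourhood $U_t$ of such a point $t$ the local model is $\CC^2$ with the involution $\inv$; thus the fixed locus of $\iota$ is governed, exactly as in Section \ref{sec:irred}, by products of the punctual fixed loci $\left((\CC^2)^{[j]}\right)^{\inv}$ of Lemma \ref{lem:quiv}, intersected with the sum-zero fibre. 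By Proposition \ref{propcc} every component of $X^\iota$ is symplectic, hence of even dimension $2m$.

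Next I would describe the components combinatorially. An $\iota$-invariant subscheme of $K_n(A)$ is a union of subschemes supported at the two-torsion points $t\in A[2]$, of lengths $n_t\ge 0$, together with ``free pairs'' $\{z,-z\}$ with $z\notin A[2]$. Set $I=\{t\in A[2]\ :\ n_t\ \text{odd}\}$; at each $t\in I$ the punctual fixed locus has the two components $\left((\CC^2)^{[n_t]}\right)^{\inv}_{\pm}$ of Lemma \ref{lem:quiv}, so a connected component of $X^\iota$ records, besides $I$, a sign function $\sigma\colon I\to\{+,-\}$. Transferring free pairs between the neighbourhoods $U_t$ connects all strata with the same data $(I,\sigma)$, exactly as in Theorem \ref{counting}, and nothing more, since pairs can be moved only two points at a time (so $I$ is a connected-component invariant) and altering $\sigma$ changes the dimension. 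A direct count gives that the component labelled by $(I,\sigma)$ has dimension $n+1-|I|-2\,\#\sigma^{-1}(-)$, and that it is, up to deformation, a copy of $Y^{[m]}$, where $Y$ is the minimal resolution of $A/\iota$, i.e.\ the Kummer $K3$ surface of $A$. The essential new feature compared with the \kntipo\ case enters here: the sum-zero condition forces $\sum_{t\in I}t=0$ in $A[2]$, because each free pair and each length-$n_t$ piece at a two-torsion point contributes, respectively, $0$ and $n_t t=(n_t\bmod 2)\,t$ to the total sum; equivalently $\|I\|=1$, with the additional parity constraint $|I|\equiv n+1\pmod 2$.

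Collecting the admissible labels $(I,\sigma)$ of fixed even dimension $2m$ then reproduces the quantity $N^n_m$ of the statement, and the range of $m$ for which $F_{2m}$ is non-empty (together with the values of $n$ admitting isolated fixed points) is read off from the extremal admissible data: the largest $I$ is all of $A[2]$ with $\sigma\equiv -$ and $n_t=3$ for every $t$. For the top-dimensional stratum $Z$ one minimizes $|I|+2\,\#\sigma^{-1}(-)$ subject to $\|I\|=1$ and $|I|\equiv n+1\pmod 2$, with $\sigma\equiv+$: if $n$ is odd this forces $I=\emptyset$, giving a single copy of $Y^{[(n+1)/2]}$; if $n$ is even the smallest admissible $I$ is $\{0\}$, again a single copy, of $Y^{[n/2]}$. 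In both cases $Z$ is one copy of $Y^{[\lfloor(n+1)/2\rfloor]}$, as claimed.

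The step I expect to be the main obstacle is the interplay of the local bookkeeping at the $16$ two-torsion points with the Albanese constraint: one must verify carefully that (a) every $\iota$-invariant subscheme of $K_n(A)$ genuinely decomposes into the local pieces of Section \ref{sec:irred} so that the global fixed locus really is the corresponding union of products of local models cut out by the sum-zero fibre; (b) connectivity within a stratum survives this cut, which it does because moving a free pair $\{z,-z\}$ leaves the total sum unchanged, so the connecting argument of Theorem \ref{counting} still applies, but this has to be checked rather than merely quoted; and (c) each component is, up to deformation, a Hilbert scheme of the Kummer $K3$ surface $Y$. Once these points are settled, the combinatorics yielding $N^n_m$ is routine.
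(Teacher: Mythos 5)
Your proposal follows essentially the same route as the paper's proof: reduce to $(K_n(A),-1)$ via Corollary \ref{one_invol_kum}, apply the local analysis of Lemma \ref{lem:quiv} at the $16$ two-torsion points, label components by the pair (set $I$ of points of odd multiplicity, sign function $\sigma$), and impose the Albanese/sum-zero constraint $\|I\|=1$. Your write-up is in fact slightly more careful than the paper's on the parity constraint $|I|\equiv n+1\pmod 2$ and on the even/odd case split for the top-dimensional stratum, but there is no substantive difference in method.
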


\begin{proof}
From Corollary \ref{one_invol_kum}, we can restrict to the case when $X=K_n(A)$ and 
$\iota$ comes from a the $-1$ involution on $A$. 
Since the involution $\iota$ acts as the identity on the classes 
in $H^2(X, \R)$, it preserves the exceptional divisor of $K_n(A)$. 
Therefore, it descends to an action on $\Sym^{n+1} A$ preserving the fibre over $0$ of the Albanese map. Let us first look at this action on $\Sym^{n+1} A$, and then let us look at the Albanese fibre of the fixed locus. 

By Proposition \ref{propcc}, the irreducible components of the fixed locus of 
$\iota$ are symplectic subvarieties of $X$, and in this case each one of them 
has even dimension $2m+2$. Let us label the fixed locus of dimension $2m+2$ by 
$F_{2m}$ (whose structure in $\Sym^{n+1}$ is the same of $F_{2m+2}$ in the previous Theorem). The symplectic involution on 
the Abelian surface $A$ has $16$ fixed points $f_1, \cdots, f_{16}$, and there is a natural identification
of $\{f_1,\dots,f_{16}\}$ with $\ZZ_2^4$. The same argument as in the previous theorem implies that there is
a natural correspondence between the $2m$-dimensional connected components of the involution fixed locus and the
set of pairs: $I_-\subset I_{odd}\subset \ZZ^4_2$ such that $2m=n-|S_{odd}|-2|S_-|$.

Finally, let us notice that the Albanese map is constant on the connected 
components and the value of the map on the connected component is $1$ 
iff $||I_{odd}||=1$. 

\end{proof}

\section*{Acknowledgements}
The first named author is partially supported by a grant from the 
Simons Foundation/SFARI (522730, LK). She thanks Misha Verbitsky for their 
interesting conversations about this result. The second named author was supported by ``National Group for Algebraic and Geometric Structures, and their Application'' (GNSAGA - INdAM). The third author was supported by  the NSF CAREER grant DMS-1352398, NSF FRG grant
DMS-1760373 and Simons Fellowship. The author thanks Professor Hiraku Nakajima for conversations and help with the references. We also thank the anonymous referee for their helpful list of suggestions and improvements to an earlier version of this paper. 

\section*{Appendix A: lattice computations}
In this appendix we include all the computations needed in the proof of Corollary \ref{one_invol}.
Let $E_8$ be the unique unimodular even positive definite lattice of rank 8 and let $S:=E_8(-2)$ be the same lattice with the quadratic form multiplied by $-2$. The discriminant group of $S$ is $\mathbb{Z}_2^8$ and its elements are classes $[v/2]$, where $v\in S$. The discriminant form on these elements takes the values $0$ or $1$ modulo $2\mathbb{Z}$ (i.e., it is $0$ if $v^2$ is divisible by $8$ and $1$ otherwise). We have the following:

\begin{lem}\label{small_square}
For every element $\alpha$ in $A_S$ there is an element $v\in S$ such that $[v/2]=\alpha$ and $v^2\geq -16$.
\end{lem}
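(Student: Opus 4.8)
The plan is to reduce the inequality to a statement about the lattice $E_8$ modulo $2E_8$, and then to estimate norms of representatives by reducing coordinates in a basis of simple roots.

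Since $E_8$ is unimodular, $S^\vee=E_8(-2)^\vee=\tfrac12 E_8$, so that $A_S=\tfrac12 E_8/E_8(-2)\cong E_8/2E_8$, the class $\alpha=[v/2]$ corresponding to the residue of $v$ modulo $2E_8$. Under this dictionary, a vector $w\in S$ with $[w/2]=\alpha$ is precisely a lattice vector $w\in E_8$ with $w\equiv v\pmod{2E_8}$, and the desired bound $w^2\ge -16$ (square computed in $S$) is equivalent to $(w,w)_{E_8}\le 8$ (square computed in $E_8$). Hence it suffices to produce, in each coset of $2E_8$ in $E_8$, a representative of $E_8$-norm at most $8$.

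Fix $v\in E_8$ and write it in a basis of simple roots, $v=\sum_{i=1}^{8}a_i\alpha_i$ with $a_i\in\mathbb{Z}$; put $T=\{i:a_i\text{ odd}\}$ and $\bar v=\sum_{i\in T}\alpha_i$. Since $2\alpha_i\in 2E_8$ we have $\bar v\equiv v\pmod{2E_8}$, so $[\bar v/2]=[v/2]=\alpha$. Using $(\alpha_i,\alpha_i)_{E_8}=2$ and $(\alpha_i,\alpha_j)_{E_8}=-1$ or $0$ according to whether $i\ne j$ are joined in the $E_8$ Dynkin diagram, one gets
\[(\bar v,\bar v)_{E_8}=2|T|-2e(T)=2c(T),\]
where $e(T)$ is the number of edges of the $E_8$ diagram with both endpoints in $T$ and $c(T)$ is the number of connected components of the subgraph it induces — the last equality holding because an induced subgraph of a tree is a forest. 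Finally, choosing one vertex from each component exhibits an independent set of the $E_8$ diagram of size $c(T)$; as that diagram is a path on $7$ vertices with one further vertex attached to an interior one, it has independence number $4$, so $c(T)\le 4$. Therefore $(\bar v,\bar v)_S=-2(\bar v,\bar v)_{E_8}=-4c(T)\ge -16$, which is the claim.

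The argument is essentially self-contained, so there is no real obstacle: the only inputs are the identification $A_S\cong E_8/2E_8$ and the elementary combinatorial fact that the $E_8$ Dynkin diagram has no independent set of size $5$. One can in fact do slightly better — $O(E_8)=W(E_8)$ has exactly three orbits on $E_8/2E_8$, represented by vectors of $E_8$-norm $0$, $2$ and $4$, so every $\alpha\in A_S$ even has a representative of square $\ge -8$ — but only the bound $-16$ is needed for Corollary \ref{one_invol}.
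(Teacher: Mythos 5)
Your proof is correct. It pursues the same basic strategy as the paper's --- represent each class of $A_S\cong E_8/2E_8$ by half a sum of roots and bound how many mutually orthogonal roots are needed --- but your execution of the key step is genuinely different and sharper. The paper starts from half-sums of at most eight distinct (arbitrary) roots, merges non-orthogonal pairs into single roots, and then invokes the claim that ``there can be at most a set of four orthogonal roots''; taken at face value about arbitrary roots that bound fails ($E_8$ contains eight pairwise orthogonal roots, e.g.\ inside a $D_8$ sublattice), so the argument really only closes once one restricts to \emph{simple} roots, which is precisely what you do. By reducing the coordinates in the simple-root basis modulo $2$ you represent every class by $\bar v=\sum_{i\in T}\alpha_i$ for a subset $T$ of nodes of the Dynkin diagram, and the identity $(\bar v,\bar v)_{E_8}=2c(T)$ (valid because an induced subgraph of a tree is a forest), combined with the observation that picking one vertex per component gives an independent set, converts the required estimate into the clean, checkable fact that the $E_8$ diagram has independence number $4$. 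This is the precise form of the orthogonality count the paper only gestures at; note also that your $c(T)$ components sum to $c(T)$ pairwise orthogonal roots, so your argument literally realizes the paper's intended reduction. Your closing aside (three $W(E_8)$-orbits on $E_8/2E_8$, with representatives of $E_8$-norm $0$, $2$, $4$) is also correct and gives the optimal bound $v^2\ge -8$, though $-16$ is all that Corollary \ref{one_invol} needs.
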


\begin{proof}
As $E_8$ is generated by its roots, the discriminant group of $S$ is generated by classes of half-roots, which have square $-4$. Thus, all elements of $A_S$ can be represented by half the sum of at most eight distinct roots. The sum of two roots is either a root (if they are not orthogonal) or an element of square $-8$. As there can be at most a set of four orthogonal roots, the claim follows.
\end{proof}

Let $L_n:=U^3\oplus E_8(-1)^2\oplus (-2n+2)$ be the lattice corresponding to the second cohomology of a \kntiposp manifold. We then have the following:

\begin{prop}\label{count_embed}
Let $L_n$ and $S$ be as above. Then, up to isometry, there is only one primitive embedding of $S$ into $L_n$ such that $S$ contains no element of divisibility two and square bigger than $-6-2n$.
\end{prop}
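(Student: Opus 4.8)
The plan is to parametrize the primitive embeddings $S\hookrightarrow L_n$ by Nikulin's gluing data and to detect the constraint-satisfying ones by controlling exactly which elements of $S$ acquire divisibility two. Two preliminary remarks guide everything. First, as recorded at the start of the appendix, $q_{A_S}$ takes values in $\{0,1\}\subset\mathbb{Q}/2\mathbb{Z}$ and is induced by the reduction $E_8\to E_8/2E_8$; since $E_8$ is unimodular this reduction is nondegenerate over $\mathbb{F}_2$, so $q_{A_S}$ is a \emph{nondegenerate} form on $A_S\cong(\mathbb{Z}/2)^8$. Second, I read ``element of divisibility two'' as a \emph{primitive} class of divisibility two, matching the usage in Proposition \ref{walls_for_two}, whose wall class is primitive; a non-primitive multiple $2v_0$ is never of the relevant type (for instance $\frac{v+2v_0}{2}$ fails to be integral in the Mukai lattice). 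With this reading, I first record the divisibility criterion: writing the overlattice gluing of a primitive embedding with complement $K=S^\perp$ as the graph $\Gamma\subset A_S\oplus A_K$ of an anti-isometry $\gamma_0\colon H\to H_K$ with $H=p_S(\Gamma)\subseteq A_S$, and using that every $v\in S=E_8(-2)$ satisfies $v/2\in S^\vee$, a short computation shows that a primitive $v\in S$ has divisibility two in $L_n$ precisely when $[v/2]\in H^\perp$, the orthogonal complement inside $A_S$. Thus the primitive divisibility-two elements of $S$ are exactly the representatives of the nonzero classes of $H^\perp$.

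The key step is that the constraint forces $H=A_S$. If $H\neq A_S$, then $H^\perp\neq 0$ by nondegeneracy of $q_{A_S}$, so I may choose $0\neq\alpha\in H^\perp$. By Lemma \ref{small_square} there is $v\in S$ with $[v/2]=\alpha$ and $v^2\geq-16$; since $\alpha\neq 0$ the element $v$ lies outside $2S$, and any odd multiple $dv_0$ with $d\geq 3$ would have square $\leq-36$, so $v$ is automatically primitive. Hence $v$ is a primitive divisibility-two element with $v^2\geq-16$, and for $n\geq 6$ one has $-16>-6-2n$, contradicting the hypothesis. Therefore $H=A_S$.

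It remains to convert $H=A_S$ into Nikulin's parametrization of Lemma \ref{lem:nik_immerge} and invoke uniqueness in a genus. Computing $|A_K|$ in two ways, via the overlattice index ($|A_K|=|A_{L_n}|\,|H|^2/|A_S|$) and via the lemma ($|A_K|=|A_S|\,|A_{L_n}|/|H_S|^2$), yields $|H_S|\cdot|H|=|A_S|$, so $H=A_S$ is equivalent to $H_S=H_N=0$. The embedding is then determined by its complement alone: $K$ must be an even lattice of signature $(3,12)$ with discriminant form $-q_{A_S}\oplus q_{A_{L_n}}$, together with a gluing isometry $\gamma_K$. Here $\operatorname{rk}K=15$ while $l(A_K)=9$, so $\operatorname{rk}K\geq l(A_K)+2$ and $K$ is indefinite; by Nikulin's criteria in \cite{nik2} such a $K$ is then unique in its genus and $O(K)\to O(q_K)$ is surjective, which, through the second clause of Lemma \ref{lem:nik_immerge} with $H_S=H_N=0$, absorbs the choice of $\gamma_K$. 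Hence there is at most one primitive embedding with $H=A_S$; and since the standard embedding $E_8(-2)\hookrightarrow U^3\oplus E_8(-1)^2\subset L_n$ (the coinvariant of a Nikulin involution) has $H=A_S$, so satisfies the constraint, it is the unique one.

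The main obstacle is the last paragraph: correctly matching the intrinsic datum $H$ with Nikulin's abstract gluing data and then running the genus-uniqueness bookkeeping (uniqueness of $K$ and surjectivity of $O(K)\to O(q_K)$). The two innocent-looking clarifications are in fact essential, namely that ``divisibility two'' refers to primitive classes and that the clean contradiction $-16>-6-2n$ requires $n\geq 6$. The boundary cases $n\leq 5$, where a class whose minimal representative has norm exactly $8$ could have all representatives of square $\leq-6-2n$, would need separate treatment; they lie outside the range driving the paper's main estimates.
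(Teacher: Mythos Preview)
Your argument is correct and follows the same strategy as the paper: classify primitive embeddings via Nikulin's data and use Lemma~\ref{small_square} to produce, in any non-standard embedding, a divisibility-two element of square at least $-16$. The paper parametrizes directly by Nikulin's $H_S\subseteq A_S$ and observes that $A_{L_n}$ is cyclic with a unique $2$-torsion element, so $|H_S|\in\{1,2\}$; you instead work with the glue projection $H=p_S(L_n/(S\oplus K))$ and the criterion $[v/2]\in H^\perp$, then translate back via $|H|\cdot|H_S|=|A_S|$. These are equivalent formulations, and in fact $H^\perp=H_S$. Your write-up is more complete than the paper's in two respects: you explicitly run the genus-uniqueness argument for $K$ (signature $(3,12)$, $\ell(A_K)=9$, so Nikulin's Theorem~1.14.2 gives uniqueness of $K$ and surjectivity of $O(K)\to O(q_K)$), which the paper leaves implicit; and you flag the primitivity reading of ``divisibility two'' and the restriction $n\geq 6$ needed for $-16>-6-2n$, neither of which the paper addresses.
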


\begin{proof}
The discriminant group of $L_n$ has one generator whose discriminant form is $-\frac{1}{2(n-1)}$. There is only one element in this group whose order is precisely two, and it has discriminant form $-\frac{n-1}{2}$ modulo $2\mathbb{Z}$. As per Lemma \ref{lem:nik_immerge}, primitive embedding of $S$ into $L_n$ are determined by a quintuple $(H_S,H_{L_n},\gamma,K,\gamma_K)$, where the first two are subgroups of $A_S$ and $A_{L_n}$, respectively, and $\gamma$ is an anti-isometry between the two. Thus, when $n$ is even, $H_S$, $H_{L_n}$ and $\gamma$ are trivial as all elements of $A_S$ have integer square. When $n$ is odd, either we are in the same case as before or we have nontrivial $H_S$, $H_{L_n}$ and $\gamma$. In the latter case, $H_{L_n}$ is unambiguously determined and by Lemma \ref{small_square} the non trivial element of $H_S$ is represented by half an element $v$ of square at least $-16$ (more specifically, at least $-16$ if $n-1$ is divisible by four and at least $-12$ otherwise). Thus, $v$ is an element of square at least $-6-2n$ and of divisibility $2$ in $L_n$, as $[v/2]$ is non trivial in $A_{L_n}$.% we can apply \ref{lem:nik_immerge} and conclude that the subgroups $H_S$ and $H_{L_n}$ When $n$ is odd, there are potentially several different embeddings of this lattice inside $H^2(X,\mathbb{Z})$, as there are two possible choices for the subgroups $H_S$ used in \ref{lem:nik_immerge}. If $H_S$ is non trivial, there will be a generator of the discriminant group
\end{proof}

We conclude this appendix with a criterion to avoid checking the last condition in Definition \ref{num_stand}:

\begin{prop}\label{mukai_not}
Let $(X,G)$ be a pair such that there exists a $K3$ (resp. Abelian) surface $S$ and $G\subset Aut_s(S)$ such that $H^2(S^{[n]})$ (resp. $H^2(K_n(A))$) and $H^2(X)$ are isomorphic $G$ representations. Moreover, suppose that $U\subset T_G(S)$. Then $(X,G)$ is numerically standard.
\end{prop}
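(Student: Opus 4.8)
The plan is to show that the hypothesis $U \subset T_G(S)$ forces the isometry between $H^2(S^{[n]})$ and $H^2(X)$ (as $G$-representations) to extend automatically to the Mukai lattices, so that the third bullet of Definition \ref{num_stand} becomes redundant. First I would recall Markman's construction \cite[Section 9]{mar_tor}: there is a canonical $G$-equivariant embedding $H^2(S^{[n]}, \ZZ) \hookrightarrow \widetilde{\Lambda} = U^4 \oplus E_8(-1)^2$ (resp.\ $U^4$ in the Kummer case) realized by sending $H^2(S^{[n]})$ to $\delta^\perp$, where $\delta$ is the (primitive, of square $-2n+2$) class orthogonal to the image; and the $K3$ (resp.\ abelian surface) Mukai lattice $\widetilde{H}(S,\ZZ) = U^4 \oplus E_8(-1)^2$ carries the $G$-action coming from $G \subset \Aut_s(S)$, acting trivially on the $\ZZ^2 = H^0 \oplus H^4$ summand. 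Under the identification $H^2(S^{[n]}) = H^2(S) \oplus \ZZ\delta$, the coinvariant lattice $S_G$ sits inside $H^2(S)$, the invariant part $T_G(S^{[n]}) = T_G(S) \oplus \ZZ\delta$, and both of these are identified with the corresponding sublattices of $H^2(X)$ by the given $G$-equivariant isometry.

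The key point is then a lattice-theoretic rigidity statement: a $G$-equivariant isometry $\phi: H^2(S^{[n]}, \ZZ) \to H^2(X, \ZZ)$ extends to a $G$-equivariant isometry of Mukai lattices precisely when it respects the embeddings data of Lemma \ref{lem:nik_immerge}, applied to the primitive embedding $S_G \hookrightarrow \widetilde{H}(S,\ZZ)$ versus the embedding giving $H^2(X)$ inside its own Mukai lattice. Since the orthogonal complement of $S_G$ in the Mukai lattice contains $U$ (two copies of the hyperbolic plane, namely $H^0 \oplus H^4$ of the surface, together with whatever lies in $T_G(S)$), and since $U \subset T_G(S)$ by hypothesis, the orthogonal complement $S_G^\perp$ inside $\widetilde{H}(S,\ZZ)$ contains $U$ as an orthogonal summand. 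A unimodular summand $U$ in the complement kills all the gluing: by the standard argument (cf.\ \cite{nik2}, the uniqueness criterion in Lemma \ref{lem:nik_immerge}), when the complement of a primitive sublattice contains $U$ as an orthogonal direct summand, the subgroups $H_S, H_N$ and the glue map $\gamma$ can always be absorbed — concretely, $O(S_G^\perp)$ then surjects onto $O(q_{S_G^\perp}) = O(q_{S_G})$, so any two primitive embeddings with the same complement-discriminant-form are equivalent, and moreover the ambient isometry can be chosen $G$-equivariantly since $G$ acts trivially on the $U$ we are using. Hence the embedding of $S_G$ (and with it the whole of $H^2$) into the Mukai lattice is unique up to the monodromy group, which is exactly the condition in Definition \ref{num_stand}.

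Thus I would conclude: the given $G$-equivariant isometry $H^2(S^{[n]}) \cong H^2(X)$ automatically matches the canonical Mukai-lattice embeddings up to monodromy, the first two bullets of Definition \ref{num_stand} hold by the hypothesis on $S_G$ and $T_G$, and therefore $(X,G)$ is numerically standard. The main obstacle I anticipate is the equivariance bookkeeping in the last step: one must check that the isometry of Mukai lattices provided by the Nikulin uniqueness argument can genuinely be taken to commute with $G$, which requires that the ``extra'' hyperbolic plane used to trivialize the glue lies in the $G$-invariant part — and this is precisely where $U \subset T_G(S)$ is used, so the hypothesis is not merely technical. A secondary point to handle carefully is the Kummer case, where the relevant Mukai lattice is $U^4$ and one uses Wieneck's description \cite[Theorem 4.1]{wie} in place of Markman's, but the argument is formally identical once one notes that the $G$-action on the generalized Kummer's Mukai lattice again fixes the relevant copy of $U$.
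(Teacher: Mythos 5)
Your core mechanism is the right one and is the same as the paper's: a hyperbolic plane sitting in the $G$-invariant part makes the relevant map $O(\cdot)\to O(q_\cdot)$ surjective, so the discriminant gluing data can be adjusted $G$-equivariantly. But the specific reduction you make --- to uniqueness of the primitive embedding of $S_G\cong E_8(-2)$ into the Mukai lattice $\Lambda=U^4\oplus E_8(-1)^2$ --- is not sufficient, and it is a red flag that your argument never genuinely uses the hypothesis: the complement of $S_G$ in $\Lambda$ \emph{always} contains the copy of $U$ coming from $H^0\oplus H^4$, on which $G$ acts trivially, so by your reasoning the embedding of $S_G$ into $\Lambda$ is unique unconditionally (indeed it is: $\mathrm{rk}(S_G^\perp)=16\geq 2+l(A_{S_G})=10$). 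If that were the whole story, Proposition \ref{count_embed} could not produce two inequivalent embeddings of $E_8(-2)$ into $L_n$ for odd $n$. The obstruction lives at the level of the flag $S_G\subset H^2\subset\Lambda$, not of $S_G\subset\Lambda$: an isometry $\Phi$ of $\Lambda$ matching the two copies of $S_G$ need not carry the Mukai vector $v_x$ to $\pm v$, hence need not preserve $v^\perp=H^2$ at all, and need not restrict to the given identification of $T_G$. Your parenthetical ``(and with it the whole of $H^2$)'' is precisely the missing step.

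The paper's proof stays inside $H^2$ and tracks the right data: the failure of $\varphi$ to extend to $\Lambda$ is measured by a mismatch of the anti-isometries $A_{H^2}\to A_{\langle v\rangle}$, i.e., by an element of $O(q_{A_{H^2}})$. One kills it by composing $\varphi$ with an isometry $\gamma$ of $H^2(S^{[n]})$ supported on $L:=U\oplus\ZZ\delta\subset T_G(S^{[n]})$ and trivial on $L^\perp$: since $L^\perp$ is unimodular one has $A_L\cong A_{H^2(S^{[n]})}$, and Nikulin's Theorem 1.14.2 applied to $L$ gives $O(L)\twoheadrightarrow O(q_L)$, so $\gamma$ can realize any element of $O(q_{A_{H^2}})$ while being $G$-equivariant and the identity on $S_G$. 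This is exactly where $U\subset T_G(S)$ enters --- it guarantees such an $L$ exists inside the invariant lattice of $H^2$, not merely inside $\Lambda$. To repair your argument you would need, after matching the two copies of $S_G$ in $\Lambda$, a further isometry of $S_G^\perp\subset\Lambda$ carrying $v_x$ to $v$ and compatible with the given isomorphism on $T_G$; that extra step is essentially equivalent to the paper's computation, so the detour through the embedding of $S_G$ buys nothing.
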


\begin{proof}
By the hypothesis we have a Hodge isometry $\varphi\,:\,H^2(X) \rightarrow H^2(S^{[n]})$ (resp. $H^2(K_n(A))$) which might not extend to an isometry of the Mukai lattice $\Lambda:=U^4\oplus E_8(-1)^2$ (resp. $\Lambda:=U^4$), that is, it is not compatible with the two embeddings $\psi_1\,:\,H^2(X,\mathbb{Z})\rightarrow \Lambda$ and $\psi_2\,:\,H^2(S^{[n]},\mathbb{Z})\rightarrow \Lambda$. Let $\delta$ be half the class of the exceptional divisor in $S^{[n]}$ (resp. $K_n(A)$) and let $\delta_x:=\varphi^{[-1]}(\delta)$. Let $v$ be a generator of $\psi_2(H^2)^\perp$ and $v_x$ be the same for $\psi_1(H^2)^\perp$. As discussed in in \cite[Section 9]{mar_tor}, the fact that $\varphi$ does not extend to $\Lambda$ means that it does not respect the two gluing data associated to the pairs $(v,\delta)$ and $(v_x,\delta_x)$. The gluing data corresponds to a choice of an anti-isometry between the two discriminant groups $A_{H^2}$ and $A_{\langle v\rangle}$. However, we have $U\subset T_G(S)$ and let $L:=U\oplus \mathbb{Z}\delta\subset T_G(S^{[n]})$ (resp. $T_G(K_n(A))$), thus, by \cite[Thm 1.14.2]{nik2} applied to $L$, we can compose $\varphi$ with an isometry $\gamma$ of $H^2(S^{[n]})$ (resp. $H^2(K_n(A))$) which is trivial on $L^\perp$ and arbitrarily non trivial on $A_L\cong A_{H^2(S^{[n]})}$, thus $\delta\circ \varphi$ extends to an isometry of the Mukai lattices. %This can happen (see the discussion in \cite[Section 9]{mar_tor}) if and only if $\varphi$ has an action on the discriminant group of $H^2(X)$ which is

% Let $\delta$ be half the class of the exceptional divisor in $S^{[n]}$ and let $\delta_x:=\varphi^{[-1]}(\delta)$. Let $v$ be a generator of $\psi_2^\perp$, as Markman's embedding coincides with the classical embedding of Mukai we have that (up to a sign change for $v$) $\frac{v-\delta}{2n-2}$ and $\frac{v+\delta}{2}$ are in $\Lambda$. This means that, if we take $[v/(2n-2)]$ as a generator of the discriminant group of $A_{\langle v\rangle}$ and the isomorphism $A_{\langle v\rangle}\cong A_{H^2(S^{[n]}}$ given by being unimodular complements, $[\delta/(2n-2)]=\pm[v/(2n-2)]$ 
\end{proof}

In particular, this proposition applies to symplectic involutions on manifolds of \kntiposp.

\section*{Appendix B: fixed loci, local case}
\label{sec:irred}
In the proof of theorem~\ref{counting} we use the Hilbert-Chow map \(\Kth^{[n]}\to \Sym^n(\Kth)\). The map commutes with the involution \(\inv\).
For our proof we need to understand the structure of the connected components of the fibers of the Hilbert-Chow map over the quotient \(U/\inv\) a small analytic neighborhood of
a point \[\prod_{i=1}^k\Sym^{2n_i}(p_i,p_i,\dots,\inv(p_i))\times\prod_{j=1}^8 \Sym^{m_j}(f_j\dots,f_j).\]
 with \(\inv(p_i)\ne p_i\), \(p_i\ne p_m\), \(i\ne m\) ,\(i,m=1,\dots,k\) and \(f_j\) are fixed point of \(\inv\).

There are \(\inv\)-equivariant analytic neighborhoods \(p_i\cup \inv(p_i)\subset U_{p_i}\subset\Kth \), \(f_j\in V_{f_j}\subset \Kth\) such that \(\inv\)-fixed locus
 \(U^\inv\) is naturally a product \(\Sym^{n_k}(U_{p_i}/\inv)\) and \(\Sym^{m_j}(V_{f_j}^\inv)\). The fibers of the Hilbert-Chow map over the first type
are the products of the punctual Hilbert schemes. In particular,  these
fibers are connected and have known dimension.  The fibers over the second type of neighborhood require need to be studied. Since \(V_{f_j}\) is analytically isomorphic to an analytic subset of \(\CC^2\) we can identify the fibers with with fibers
of the corresponding Hilbert-Chow map for \(\CC^2\).

The surface \(\CC^2\) has a natural symplectic form \(\omega=dx\wedge dy\) and the involution
\[\inv: x\mapsto -x,\quad y\mapsto -y\]
preserves this form. The quotient of \(\CC^2\) by the involution is a singular surface that admits
a symplectic resolution:
\[\widehat{\mathrm{A}}_1\to\mathrm{A}_1=\CC^2/\inv.\]

It is elementary to see that the \(\inv\)-fixed locus on \((\CC^2)^{[2]}\) is isomorphic to \(\widehat{\mathrm{A}}_1\).
Below we show a generalization of this statement:
\begin{lem}\label{lem:quiv}
  For any \(n\) we have 
  \[\left((\CC^2)^{[n]}\right)^\inv=(\widehat{A}_1)^{ k},\]
  if $n=2k$ is even and for odd $n=2k+1$:
  $$\left((\CC^2)^{[n]}\right)^{\inv}=\mathfrak{M}(k\delta+e_1,e_1)\bigcup \mathfrak{M}(k\delta+e_2,e_1)$$
  where the $\mathfrak{M}(v,w)$ is the quiver variety for the quiver of the affine Dynkin diagram of type
  $\tilde{A}_1$ and $\delta=e_1+e_2$ is the imaginary root of the corresponding root system.
  The quiver varieties are connected and of dimension:
 $$  \dim\left(\mathfrak{M}(k\delta+e_1,e_1)\right)=2k,\quad\dim\left( \mathfrak{M}(k\delta+e_2,e_1)\right)=2k-2.$$
\end{lem}

To explain the statement we need to remind our reader some basics of the theory of the quiver varieties \cite{Nakajima98}.
A quiver is a directed graph $Q$ with a set of vertices $I$. Given $\alpha\in \mathbb{N}^I$, the set of representations of the
quiver is:
$$\mathrm{Rep}(Q,\alpha)=\oplus_{a\in Q} \mathrm{Mat}(\alpha_{h(a)}\times \alpha_{t(a)}),$$
where $h(a)$ and $t(a)$ are the head and the tail of the corresponding arrow. The group $$G(\alpha)=\left(\prod_{i\in I}\GL(\alpha_i)\right)/\CC^*$$
acts on the vector space of representations.

The cotangent bundle of the space of representations is the space of the representations of the  {\it double} of the quiver $\bar{Q}$:
$$\mathrm{T}^*\mathrm{Rep}=\mathrm{Rep}(\bar{Q},\alpha),$$
where the double is the quiver obtained from $Q$ by adjoining a reverse arrow $a^*$ for every arrow $a\in Q$.

The moment map $\mu:\mathrm{Rep}(\bar{Q},\alpha)\to \Lie(G(\alpha))$ is given by:
$$\mu(x)_i=\sum_{h(a)=i}x_ax_{a^*}-\sum_{t(a)=i}x_{a^*}x_a.$$

Let $Q_0$ be a quiver and $u,v\in \mathbb{N}^{I_0}$. We define $Q$ to be the quiver with the set of vertices $I=I_0\cup \infty$ and
the set of arrows is the union of the set of arrows of $Q_0$ and the arrows $v_i$ from  $\infty$ to $i\in Q_0$. Respectively,
we define:
$$\mathbf{M}(v,w)=\mathrm{Rep}(\bar{Q},\alpha),\quad G_v=G(\alpha),$$
where $\alpha$ is the vector with coordinates: $\alpha_i=u_i$, $i\in I_0$ and $\alpha_\infty=1$.
Nakajima \cite{Nakajima98} defines the quiver variety as the GIT quotient of the subvariety of  $\mathbf{M}(v,w)$:
$$\mathfrak{M}(v,w)=\mu^{-1}(0)//(G_v,\chi),$$
where $\chi$ is the character of the group defined by $\chi(g)=\prod_{k\in I}\det(g_k^{-1})$. We indicate the dependence of
the quiver variety on the underlying quiver by the subindex: $\mathfrak{M}_{Q_0}(v,w)$, $\mathbf{M}_{Q_0}(v,w)$.

In our study we are most interested in the quiver varieties associated to the following two quivers:
\[\mathbf{Q}_0=\begin{tikzcd}
  \arrow[l,loop left]\bullet 
\end{tikzcd}, \quad
\mathbf{Q}'_0=
 \begin{tikzcd}
\bullet\arrow[r,bend right]\arrow[r,bend left]&\bullet
\end{tikzcd}
\]

these are the quivers of affine Dynkin diagrams of types $\tilde{A}_0$ and $\tilde{A}_1$. Let the dimension vectors be
$(v,w)=((n),(1))$, $(v,w)=((n_1,n_2),(0,1))$, then
the corresponding enhanced quivers are:

\[\mathbf{Q}=\begin{tikzcd}
  \arrow[l,loop left]1 &\arrow[l]\infty
\end{tikzcd}, \quad
\mathbf{Q}'=
 \begin{tikzcd}
1\arrow[r,bend right]\arrow[r,bend left]&2&\arrow[l]\infty
\end{tikzcd},
\]
in the pictures of the quivers we introduced the labels of the vertices.

The  starting point of our proof is the quiver description of the space $\left(\CC^2\right)^{[n]}$ \cite{Nakajima99}:
$$\mathfrak{M}_{\mathbf{Q}_0}((n),(1))=\left(\CC^2\right)^{[n]}.$$

\begin{proof}[Proof of lemma \ref{lem:quiv}]

The involution \(\inv\) on \(\CC^2\) induces an action on the \(\mathbf{M}_{\mathbf{Q}_0}((n),(1))\),  this involution acts trivially on
\(\CC^1\) which corresponds to the vertex $\infty$, and
the space \(\CC^n\) corresponding to the vertex $1$ decomposes into the 
anti-invariant and invariant parts
\(\CC^n=\CC^{n_1}\oplus \CC^{n_2}\).

The vector space $\CC^n$ in the quiver description of the Hilbert scheme corresponds to the quotient space
$\CC[x,y]/I$ in the interpretation of the natural Hilbert scheme. Moreover, the image of the map corresponding to the arrow from $\infty$ to $1$
is the span of  $1\in \CC[x,y]/I$, hence the image is invariant under the involution $\inv$. Thus, the involution invariant part of the
quiver variety union of the quiver varieties constructed from the quiver representations is of the form:

\[
 \begin{tikzcd}
  \CC^{n_1}\arrow[r,bend right]\arrow[r,bend left]&\CC^{n_2}\arrow[r]&\CC^1
\end{tikzcd}.
\]

More formally, we conclude that we have an inclusion:
$$\left(\left(\CC^2\right)^{[n]}\right)^{\inv}\subset \bigcup_{n_1+n_2=n}\mathfrak{M}_{\mathbf{Q}'_0}((n_1,n_2),(0,1)).$$

Next we recall the result of \cite{CrawleyBoevey01} that concerns with the classification of connected
non-empty quiver varieties. The result from the last part of the introduction in \cite{CrawleyBoevey01} states
that the quiver variety $\mathfrak{M}_{Q_0}(v,w)$ is non-empty if and only if $v$ is a positive root of
the Kac-Moody Lie algebra corresponding to the quiver $Q_0$ and if it is non-empty, then it is connected.

The Kac-Moody Lie algebra corresponding to the quiver $\mathbf{Q}'_0$ is the Lie algebra of the loop group
of $\SL_2$ and the roots of this Lie algebra are:
$$n\delta,\quad e_1+n\delta,\quad e_2+n\delta,$$
here $\delta=e_1+e_2$.

Thus, we can refine our previous inclusion:
$$\left(\left(\CC^2\right)^{[n]}\right)^\inv\subset \mathfrak{M}_{\mathbf{Q}'_0}((n/2,n/2),(0,1)),\quad \mbox{ if } n \mbox{ is even},$$
$$\left(\left(\CC^2\right)^{[n]}\right)^\inv\subset \bigcup_{\epsilon=\pm}\mathfrak{M}_{\mathbf{Q}'_0}(((n+\epsilon 1)/2,(n-\epsilon 1)/2),(0,1)),\quad \mbox{ if } n \mbox{ is odd}.$$

In  the case of even $n$ we observe that since the $\inv$-invariant  locus is not empty, the inclusion is actually an equality.
The fact that the corresponding quiver variety is the Hilbert scheme of points on the surface $\hat{A}_1$ is standard
(see, for example, Theorem 4.9 in \cite{kuz} ). 

In the case of odd $n$ we observe that the involution fixed locus must have at least two connected components. Indeed, if
$I\subset \CC[x,y]$ is an involution fixed ideal, then the quotient space $\CC[x,y]/I$ has an action of the involution and
the dimension $d(I):=\dim \left(\CC[x,y]/I\right)^\inv$ is constant along any connected component of $\left(\left(\CC^2\right)^{[n]}\right)^\inv$.
It is not hard to find two monomial ideals $I^\pm$ of codimension $n$ and $d(I^\pm)=(n\pm 1)/2$, these two ideals belong to two disjoint
connected components. Thus, we conclude that in the case of odd $n$, the inclusion is also an equality.

The formula for the dimension of the quiver varieties is standard and could be found, for example, in \cite{CrawleyBoevey01}.
\end{proof}

For small value of $n$, the result above has a more intuitive explanation. Indeed, if $n=2$, then the fixed locus
$\left((\CC^2)^{[2]}\right)^{\inv}$ is the closure of the locus consisting of the pairs of points $z,\inv(z)$, $z\ne (0,0)$.
On the other hand, if $n=3$, there are two connected components of the involution: the closure of the locus of triples
$(z,(0,0),\inv(z))$, $z\ne (0,0)$ and an isolated point which is the square of the maximal ideal $(x,y)^2$.
Then the connected components could be revealed by the analysis of the punctual Hilbert scheme $(\CC^2)^{[3]}_{(0,0)}$ which is
the cone over the twisted cubic. In more details \cite{Lehn}, the punctual Hilbert scheme parameterizes  ideals
\[I_{a,b,c,d,e}=(ax+by+ex^2,bx+cy+exy,cx+dy+ey^2,x^3,x^2y,xy^2,y^3),\]
where \((a:b:c:d:e)\in \mathbb{P}^4\) and \(ac=b^2\), \(ad=bc\), \(bd=c^2\) are the defining equations of the twisted cubic.

The involution \(\inv\) inverts variables \(a,b,c,d\) and thus fixed locus is the union of the vertex and the base of the cone.
The vertex of the cone corresponds to the ideal \((x^2,xy,y^2)\) that has no \(\inv\)-invariant deformations. The base \(e=0\) of the cone is parameterized
by the ideals generated by \(ax+by\) and \((x,y)^3\), the triple points on a line. One can deform such ideal to \(\inv\)-invariant ideal  \((ax+by,(x^2-f)x)\) if \(a\ne 0 \)
and \((ax+by,(y^2-f)y)\) if \(b\ne 0\).

Let us fix notations for the two connected components of the involution. Let \(U\subset \CC^2\) to a \(\inv\)-equivariant analytic neighborhood of \((0,0)\)  we denote the component of \(\left(\left(U\right)^{[n]}\right)^{\inv}\) of smaller dimension and large
dimension by:
\begin{equation*} %\label{eq:comps+}
  \left(\left(U\right)^{[n]}\right)^{\inv}_-,\quad \left(\left(U\right)^{[n]}\right)^{\inv}_+.
  \end{equation*}
These manifolds are used in the proof of theorem~\ref{counting}.

%  In the appendix below we present an error in the   formula \cite{KamenovaMongardiOblomkov19} for the enumeration of connected components of the locus fixed by a symplectic involution inside hyperk\"ahler manifolds of types \(K3^{[n]}\) and generalized Kummer. We also provide further precisions concerning the involutions considered in the Kummer case.

%\maketitle

\section{Erratum}

In the \cite{KamenovaMongardiOblomkov19}, the main body of the current paper, the enumerative part of Theorems 1.1 and 1.3
is incorrect.  The source of error is an erroneous interpretation of the results of
\cite{CrawleyBoevey01} in the proof of Lemma B.1.
The error does not affect the enumeration of the top dimensional components
as in Theorem 3.7  and 3.9.
Moreover, the result of Theorem 1.3 does not apply to all involutions with a non trivial action on $H^3$, and we detail here the correct type of involutions considered.

The correct interpretation of the results of \cite{CrawleyBoevey01} yields statement
\[ ((\CC^2)^{[n]})^{\imath}=\bigcup_{n_1+n_2=n}\mathfrak{M}_{Q'_0}((n_1,n_2),(0,1)),\]
where \(n_1\ge 0,n_2>0\) and \(\mathfrak{M}_{Q'_0}((n_1,n_2),(0,1))\ne \emptyset\) iff
\(d(n_1,n_2)=n_2-(n_1-n_2)^2\ge 0\).  Moreover, the reflection functor isomorphisms developed
in \cite{CrawleyBoevey01} imply:
\[\mathfrak{M}_{Q'_0}((n_1,n_2),(0,1))\simeq (\widehat{A}_1)^{d(n_1,n_2)}.\]
In \cite{KamenovaMongardiOblomkov19}

Thus after correction we obtain the correct version of Lemma B.1

\begin{lem} For any \(n\) we have
  \[ ((\CC^2)^{[n]})^{\imath}=\bigsqcup_k (\widehat{A_1})^{[k]},\]
  where \(k=n_2-(n_1-n_2)^2\) with \(n_1+n_2=n\) and \(n_1\ge 0\), \(n_2>0\).
\end{lem}

The lower-dimensional component counts  are special cases of Theorems 3.0.3 and 3.0.7  from \cite{KamenovaMongardiOblomkov23}. To state the results let introduce the standard theta function:
\[\vartheta(\eta;q)=\sum_{n\in \ZZ}q^{n^2}\eta^n.\]

\begin{thm}
Let $S$ be a K3 surface and let $\imath$ be a symplectic involution of  $S$. Let $S^{[n]}$ be the Hilbert scheme of $n$ points 
on $S$ and let us consider the induced action of $\imath$ on $S^{[n]}$. 
Then all the irreducible components of the locus stabilized by $\imath$ 
are deformation equivalent to Hilbert schemes of points on a K3 surface or isolated points, 
and their number for each dimension $2k$ is 
$$ N_k= \Theta(n-2k),$$
where $\Theta(j)$ is the $j$-th coefficient of the Theta series 
\[\Theta(q)=\sum_{i\ge 0}\Theta(i)q^i=\vartheta(q;q^2)^8.\]
\end{thm}

\begin{oss}\label{rem:invol_types}
All involutions on generalized Kummer manifolds act trivially on the second cohomology, and they can be subdivided in three different geometrical categories:
\begin{itemize}
\item Involutions obtained by a translation of a two torsion point.
\item Involutions obtained by a sign change composed with a translation by a two torsion point.
\item Involutions obtained by a composition of a translation by a point of  order at least three with a sign change.
\end{itemize}
The first kind of involutions only appears when $n$ is odd.
\end{oss}
In this paper we deal with the second kind of involutions, and we call them \emph{regular} involutions. The fixed locus of the first kind of involutions was analyzed in \cite[Lemma 3.5]{ogu}, while the third kind was analyzed only in the case of Kummer sixfolds in \cite[Lemma 2.4]{floc}. It would be interesting to compute the fixed locus in general for an involution of the third kind.

\begin{thm} \label{thm:kum}
Let $X=K_{n}(A)$ be a  \(n\)-Kummer \hk manifold  and let $\imath\in Aut(A)$ 
be a regular symplectic involution of the abelian surface $A$.
%Assume $\imath$ preserves the Albanese map of the generalized Kummer $X$, 
%and acts non-trivially on $H^3(X)$. 
Then all irreducible components of $X^\imath$ are of $K3^{[k]}$ type, and their 
number is 
$$ N_k= \Theta(n-2k;1),$$
where $\Theta(j;1)$ is the $j$-th coefficient of the Theta series 
\[\Theta(q)=\sum_{i\ge 0,\gamma\in A,\gamma^2=1}\Theta(i;\gamma)q^i\gamma=\prod_{\gamma\in A,\gamma^2=1}\vartheta(\gamma\cdot q;q^2).\]
\end{thm}

\begin{oss}
All three kinds of involutions  of Remark \ref{rem:invol_types} can be recognized by their action on higher cohomology: involutions obtained by pure translations have a trivial action on $H^3$, and involutions of the third kind, those obtained by composing a translation of order at least three with $-1$, are recognizable by their action on higher cohomology. As an example, if $n\equiv 3$ modulo $4$, in $H^6$ there are $256$ distinguished classes of codimension three subvarieties of a generalized Kummer. These subvarieties are in correspondance with points in $A[4]$, each of them is the locus of subschemes having support of multiplicity at least four on a given four torsion point of $A$. Regular involutions fix 16 of these subvarieties and permute the rest, while involutions of the third kind (obtained with an order four translation) freely permute these subvarieties.

\end{oss}

\section*{Acknowledgements for the Erratum}
We are grateful to Mirko Mauri for pointing out the inaccuracy in Theorem~\ref{thm:kum}. L.K. is partially supported by a grant from the Simons Foundation/SFARI (522730, LK). 
G.M. is supported by PRIN2020 research
grant 2020KKWT53 and by PRIN2022 research grant 2022PEKYBJ, and is a member of INdAM GNSAGA. A.O. is partially supported by NSF grant DMS-2200798 and 
NSF FGR grant DMS-1760373.

%\bibliography{/Users/oblomkov/Dropbox/bibtexfiles/expanded}
%\bibliographystyle{plain}

\appendix

\end{document}